\newtheorem{theorem}{Theorem}[section]
\newtheorem{proposition}[theorem]{Proposition}
\newtheorem{lemma}[theorem]{Lemma}
\theoremstyle{definition}
\newtheorem{remark}[theorem]{Remark}
\numberwithin{equation}{section}
\newcommand{\N}{\mathbb{N}}                        
\newcommand{\R}{\mathbb{R}}                        
\newcommand{\C}{\mathbb{C}}                        
\newcommand{\vp}{\varphi}                          
\newcommand{\VV}{\mathcal{V}}                      
\newcommand{\FF}{\mathcal{F}}                      
\newcommand{\GG}{\mathcal{G}}                      
\newcommand{\II}{\mathcal{I}}
\newcommand{\OO}{\mathcal{O}}                      
\newcommand{\OYeta}{{\mathcal{O}}_{Y,\eta}}        
\newcommand{\anTor}{\widetilde{\mathrm{Tor}}}      
\newcommand{\antens}{\tilde{\otimes}}              
\newcommand{\tensR}{\tilde{\otimes}_R}             
\newcommand{\mm}{\mathfrak{m}}                     
\newcommand{\nn}{\mathfrak{n}}                     
\newcommand{\supp}{\mathrm{supp}}                  
\newcommand{\rank}{\mathrm{rank}\,}                
\newcommand{\im}{\mathrm{im}\,}                    
\newcommand{\KK}{\mathbb{K}}                       
\newcommand{\tx}{\tilde{x}}                        
\newcommand{\txi}{\tilde{\xi}}                     
\newcommand{\vr}{\varrho}                          
\newcommand{\lb}{\lambda}	                   
\newcommand{\id}{\mathrm{id}}              
\begin{document}

\title{An inductive analytic criterion for flatness}

\author{Janusz Adamus, Edward Bierstone and Pierre D. Milman}
\address{J. Adamus, Department of Mathematics, The University of Western Ontario, London, Ontario, Canada N6A 5B7
         and Institute of Mathematics, Jagiellonian University, ul. {\L}ojasiewicza 6, 30-348 Krak{\'o}w, Poland}
\email{jadamus@uwo.ca}
\address{E. Bierstone, The Fields Institute, 222 College Street, Toronto, Ontario, Canada M5T 3J1
         and Department of Mathematics, University of Toronto, Toronto, Ontario, Canada M5S 2E4}
\email{bierston@fields.utoronto.ca}
\address{P.D. Milman, Department of Mathematics, University of Toronto, Toronto, Ontario, Canada M5S 2E4}
\email{milman@math.toronto.edu}
\thanks{Research partially supported by Natural Sciences and Engineering
Research Council of Canada Discovery Grant OGP 355418-2008 and Polish Ministry 
of Science Discovery Grant NN201 540538 (Adamus), and NSERC Discovery 
Grants OGP 0009070 (Bierstone), OGP 0008949 (Milman)}

\subjclass[2000]{Primary 13C11, 32B99; Secondary 14B25}

\keywords{flat, Weierstrass preparation, local flattener, generic flatness}

\begin{abstract}
We present a constructive criterion for flatness
of a morphism of analytic spaces $\vp: X \to Y$ (over $\KK = \R$ or $\C$)
or, more generally, for flatness over $\OO_Y$ of a coherent sheaf of $\OO_X$-
modules $\FF$. The criterion is a combination of a simple linear-algebra
condition ``in codimension zero'' and a condition ``in codimension
one'' which can be used together with the Weierstrass preparation theorem
to inductively reduce the fibre-dimension of the morphism $\vp$.
\end{abstract}
\maketitle

\section{Introduction}
\label{sec:intro}

The main result of this article is a constructive criterion for flatness
of a morphism of analytic spaces $\vp: X \to Y$ (over $\KK = \R$ or $\C$)
or, more generally, for flatness over $\OO_Y$ of a coherent sheaf of $\OO_X$-
modules $\FF$.

In the special case that $X=Y$ and $\vp = \id_X$ (the identity morphism
of $X$), our
criterion reduces to the following ``linear algebra criterion''. In a 
neighbourhood of a point $a \in X$, an $\OO_X$-module $\FF$ can be 
presented as
$$
\OO_X^p \stackrel{\Phi}{\longrightarrow} \OO_X^q \longrightarrow \FF
\longrightarrow 0,
$$
where $\Phi$ is given by multiplication by a $q \times p$-matrix of 
analytic functions. Let $r = \rank \Phi(a)$. Then $\FF_a$ is $\OO_{X,a}$-flat
if and only if all minors of order $r+1$ of $\Phi$ vanish near $a$.

Our flatness criterion, in general, is a combination of a condition ``in
codimension zero'' similar to the preceding and a condition ``in codimension
one'' which can be used together with the Weierstrass preparation theorem 
to inductively reduce the fibre-dimension of the morphism $\vp$.

To justify the criterion, we use it to give natural constructive proofs of
several classical results --- Hironaka's existence of the local flattener
\cite{H}, Douady's openness of flatness \cite{Dou}, and Frisch's
generic flatness theorem \cite{Fri}. The proofs are essentially a mix of
linear algebra and appropriate applications of the Weierstrass preparation
theorem.

For example, in the case $X=Y$, the linear algebra criterion above
provides an immediate construction of the \emph{local flattener} of
$\FF$ at $a$ (i.e., the largest germ of an analytic subspace $T$ of $X$
at $a$ such that $\FF_a$ is $\OO_T$-flat). We can simply take $\OO_T =
\OO_X / \II$, where the ideal $\II$ is generated by the minors of order
$r+1$ of $\Phi$. Hironaka's local flattener, in general, can be described
using a similar linear algebra construction and the Weierstrass preparation
theorem.
\medskip

\emph{Algebraic formulation of the flatness criterion.} Let $\vp: Z \to W$
and $\lambda: T \to W$ denote morphisms of analytic space-germs, and let
$F$ denote a finite $\OO_Z$-module. We are concerned with
$\OO_T$-flatness of the module 
$F \tilde{\otimes}_{{\OO}_W} {\OO}_T$, where $\tilde{\otimes}_{{\OO}_W}$ 
denotes the 
analytic tensor product (i.e., the tensor product in the category of local 
analytic 
${\OO}_W$-algebras; see, for example, \cite{ABM1}).
Via the embedding $\ (\phi, \id_Z) : Z \to W \times Z$ and the natural 
projection $\pi : W \times Z \to W$, 
we can view $F$ as an $\OO_{W \times Z} $-module and therefore as an 
$\OO_W $-module.
Via an embedding 
$Z \hookrightarrow \KK^m_0$ we can also replace $Z$ by $\KK^m_0$
without changing 
the $\OO_W$-module structure of $F$. In particular, then 
$\OO_Z = \KK\{x\} = \KK\{x_1,\dots,x_m\}$, $\OO_W = R/J$ for an appropriate 
ideal $J$ 
in $R := \KK\{y\}=\KK\{y_1,\dots,y_n\}$, and $\OO_{W \times Z} = R\{x\}
:= \KK\{y, x_1,\dots,x_m\}$. 
Let $A := \OO_{W \times Z}$. 
Let $\mm$ denote the maximal ideal $(y_1,\dots,y_n)$ of $R$, and
let $\nn = \mm+(x_1,\dots,x_m) \subset A$. Then $\nn$ is the maximal ideal
of $A$. Given a power series 
$f=f(y,x)\in A$, we denote by $f(0)$ or by $f(0,x)$ its \emph{evaluation at}
$y=0$; i.e., the 
image of $f$ under the homomorphism $A\to A(0):=A\tensR R/\mm$ of $R$-modules.
Similarly, given an $A$-submodule $M$ of $A^q$, we denote by $M(0)$ the 
\emph{evaluation} of $M$ \emph{at} $y=0$; i.e., $M(0)=\{m(0)\in A(0)^q: 
m\in M\}\ $. 
In particular $A(0)\cong\KK\{x\}$.
\par

We are thus interested in flatness of $F\tensR R/J$ over $R/J$, where 
$F$ is a
finitely generated $A$-module and $J$ is an ideal in $R$.

\begin{theorem}
\label{thm:main} 
Let $R, A, F$ and $J$ be as above.
\smallskip

\noindent
\emph{(A)} There exist $g\in A,\, l\in\N$ and a homomorphism $\psi: A^l\to F$ 
of $A$-modules such that $g(0,x)\neq0,\, g\cdot F\subset\im\psi$ and 
$\ker\psi\subset\mm\cdot A^l$. 
\smallskip

\noindent
\emph{(B)} $F\tensR R/J$ is a flat $R/J$-module if and only if, for any
$g,\,l$ and $\psi$ as in \emph{(A)}, the 
following two conditions hold:
\begin{enumerate}
\item $\ker\psi \subset J\cdot A^l$;
\item $(F/\im\psi)\tensR R/J$ is a flat $R/J$-module.
\end{enumerate}
\end{theorem}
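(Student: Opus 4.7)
\emph{Part (A).} I construct $\psi$ from a generic basis of the fibre $F(0) := F/\mm F$, viewed as a finitely generated module over $A(0) := A/\mm A = \KK\{x\}$. Set $l := \dim_K\bigl(F(0) \otimes_{A(0)} K\bigr)$ where $K := \mathrm{Frac}(A(0))$, choose $\bar f_1,\ldots,\bar f_l \in F(0)$ whose images form a $K$-basis, lift to $f_1,\ldots,f_l \in F$, and define $\psi(e_i) = f_i$. The $A(0)$-linear independence of the $\bar f_i$ gives $\ker\psi \subset \mm A^l$: any relation $\sum a_i f_i = 0$ reduces to $\sum a_i(0)\bar f_i = 0$ in $F(0)$, forcing $a_i(0) = 0$. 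For the existence of $g$: $F(0)/\sum A(0)\bar f_i$ is $A(0)$-torsion, so some $h \in A(0)\setminus\{0\}$ annihilates it; lifting $h$ to $g_0 \in A$ and applying to $A$-generators $\omega_1,\ldots,\omega_t$ of $F$ yields a matrix identity $(g_0 I_t - C)\vec\omega \in (\im\psi)^t$ with $C$ having entries in $\mm A$. Then $g := \det(g_0 I_t - C)$ satisfies $gF \subset \im\psi$ by the adjugate identity, and $g(0,x) = h^t \neq 0$.

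\emph{Part (B), necessity.} Set $\bar R := R/J$, $\bar A := A/JA$, $\bar F := F\tensR\bar R$, $\KK := R/\mm$, and $\bar\psi : \bar A^l \to \bar F$ the reduction of $\psi$; by (A), $\bar\psi\otimes_{\bar R}\KK = \psi(0)$ is injective. The basic tool is the local flatness criterion: a finite $\bar A$-module $M$ is $\bar R$-flat iff $\mathrm{Tor}_1^{\bar R}(M,\KK) = 0$. Assuming $\bar F$ is $\bar R$-flat, the composite $A(0)^l \twoheadrightarrow \im\bar\psi\otimes_{\bar R}\KK \to F(0)$ equals $\psi(0)$, so the second arrow is injective; combined with the $\mathrm{Tor}$-LES of $0\to\im\bar\psi\to\bar F\to\bar F/\im\bar\psi\to 0$ and $\mathrm{Tor}_1^{\bar R}(\bar F,\KK)=0$, this gives $\mathrm{Tor}_1^{\bar R}(\bar F/\im\bar\psi,\KK) = 0$, so $\bar F/\im\bar\psi = (F/\im\psi)\tensR\bar R$ is $\bar R$-flat, which is (2). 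With $\bar F$ and $\bar F/\im\bar\psi$ both flat, the same SES forces $\im\bar\psi$ flat; the $\mathrm{Tor}$-LES of $0\to\ker\bar\psi\to\bar A^l\to\im\bar\psi\to 0$, combined with injectivity of $A(0)^l\to\im\bar\psi\otimes\KK$, yields $\ker\bar\psi\otimes_{\bar R}\KK = 0$, and Nakayama on the finite $\bar A$-module $\ker\bar\psi$ gives $\ker\bar\psi = 0$; hence $\ker\psi\subset JA^l$, which is (1).

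\emph{Part (B), sufficiency.} Assume (1) and (2). By (1) and $R$-flatness of $A$, tensoring $0\to\ker\psi\to A^l\to\im\psi\to 0$ with $\bar R$ gives $\im\psi\tensR\bar R \cong \bar A^l$. The key step is injectivity of $\bar\psi$: if $\bar v \in \ker\bar\psi$ with lift $v\in A^l$, then $\psi(v) = \sum_k j_k h_k$ with $j_k\in J$; using $gh_k = \psi(u_k)\in\im\psi$ from (A), we obtain $\psi(gv - \sum_k j_k u_k) = 0$, so by (1), $gv - \sum_k j_k u_k \in JA^l$, whence $gv \in JA^l$, i.e., $g\bar v = 0$ in $\bar A^l$. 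After a generic linear change of $x$-coordinates, Weierstrass preparation writes $g = u\cdot P$ with $u\in A^{\times}$ and $P$ a Weierstrass polynomial in $x_m$; Weierstrass division then shows $\bar P \in \bar A$ is a non-zero-divisor, so $\bar v = 0$. The resulting SES $0\to\bar A^l\to\bar F\to(F/\im\psi)\tensR\bar R\to 0$ has $\bar R$-flat outer terms (the right by (2)), forcing $\bar F$ to be $\bar R$-flat. The main obstacle is precisely this Weierstrass step, which converts the structural fact $g(0,x)\neq 0$ from (A) into the non-zero-divisor property of $g$ in $\bar A$ needed to conclude $\bar v = 0$.
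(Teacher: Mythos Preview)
Your argument is correct and takes a genuinely different route from the paper. For (A), the paper works from a fixed presentation $A^p\xrightarrow{\Phi}A^q\xrightarrow{\Psi}F\to 0$: it writes $\Phi$ in block form with an $r_\mm\times r_\mm$ block $\alpha$ of maximal rank modulo $\mm$, and takes $g=\det\alpha$, $l=q-r_\mm$, $\psi=\Psi|_{\{0\}\oplus A^l}$. Your generic-rank/adjugate construction is more intrinsic but yields the same $l$. For (B) the approaches diverge sharply. The paper recasts flatness (Lemma~3.1) as the kernel condition $(\ker_J\Phi)(0)=\ker\Phi_\mm$, proves that condition~(1) is equivalent to $\rank_J\Phi=\rank\Phi_\mm$ through a chain of six matrix-theoretic equivalences (Lemma~3.3), and identifies condition~(2) with the analogous kernel condition for an auxiliary matrix $G$ over $R\{\tilde x\}$ obtained from $\alpha^{\#}\beta$ by Weierstrass division (Proposition~3.1). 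You replace all of this by the local flatness criterion, two Tor long exact sequences, and Nakayama. Your route is shorter and in fact proves the stronger statement that conditions (1) and (2) for a \emph{single} choice of $(g,l,\psi)$ already force flatness; what the paper's matrix calculus buys in exchange is the explicit reduced presentation $G$, which is precisely what drives its constructive description of the local-flattener ideal in Section~2.

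One point to tighten. In the necessity direction you assert that $\psi(0)$ is injective ``by (A)'', but the inclusion $\ker\psi\subset\mm A^l$ alone does not give this for an arbitrary $\psi$ satisfying the hypotheses of (A); it only says $\psi(a)=0\Rightarrow a\in\mm A^l$, whereas you need $\psi(a)\in\mm F\Rightarrow a\in\mm A^l$. The fix is the same cancellation you already carry out in the sufficiency direction, applied with $J=\mm$: if $\psi(a)=\sum_j y_j h_j$, use $gh_j=\psi(u_j)$ to get $ga-\sum_j y_j u_j\in\ker\psi\subset\mm A^l$, hence $ga\in\mm A^l$, and then cancel $g$ via your Weierstrass argument (this cancellation is the paper's Lemma~3.2). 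With that line added, your proof is complete.
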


\begin{remark}
\label{rem:fbd-reduction}
The above theorem allows one to study flatness of a module $F$ by repeated
reduction of the 
fibre-dimension over $R$. Indeed, consider $g$ and $\psi$ as in (A). First
suppose that $g(0,0)=0$. 
Since $g(0,x)\neq0$, we can apply the Weierstrass division theorem (after a 
generic linear change in
$x$) to conclude that $A/g\cdot A$ is a finite 
$R\{\tx\}$-module, where $\tx=(x_1,\dots,x_{m-1})$. Then $F/\im\psi$ is a 
finite 
$R\{\tx\}$-module too, since $g\cdot F\subset\im\psi$. On the other hand, if 
$g(0,0)\neq0$ (which 
is the case when the number of $x$-variables is $0$), then condition (2) of 
(B) in the theorem is 
vacuous and no fibre dimension reduction is needed.
\end{remark}

\begin{proof}[Proof of Theorem \ref{thm:main} (A)]
Consider a presentation of $F$ as an $A$-module
\begin{equation}
\label{eqn:pres_F}
A^p\stackrel{\Phi}{\longrightarrow}A^q\stackrel{\Psi}{\longrightarrow}F\to0\,.
\end{equation}
By applying $\tensR R/J$ and $\tensR R/\mm$ to (\ref{eqn:pres_F}), 
we get presentations
\begin{equation}
\label{eqn:pres_FJ}
A^p/J\!\cdot\!A^p\stackrel{\Phi_J}{\longrightarrow}A^q/J\!\cdot\!A^q\stackrel{\Psi_J}{\longrightarrow}F\tensR R/J\to0
\end{equation}
and
\begin{equation}
\label{eqn:pres_Fm}
A^p/\mm\!\cdot\!A^p\stackrel{\Phi_{\mm}}{\longrightarrow}A^q/\mm\!\cdot\!A^q\stackrel{\Psi_{\mm}}{\longrightarrow}F\tensR R/\mm\to0
\end{equation}
of $F\tensR R/J$ and $F\tensR R/\mm$ respectively.
Notice that, identifying $\Phi$ with a matrix (with entries in $A$), 
$\Phi_{\mm}$ becomes 
the matrix with entries obtained by evaluating the corresponding entries 
of $\Phi$ at $y=0$.

Let $r_{\mm}:=\rank(\Phi_{\mm})$. Choose an ordering of the columns and 
rows of $\Phi$ so that $\Phi$ can be written in block form as
\begin{equation}\label{eqn:block}
\Phi=\begin{bmatrix}\alpha & \beta\\ \gamma & \delta\end{bmatrix},
\end{equation}
where the matrix $\alpha$ is of size $r_{\mm}\times r_{\mm}$ and 
$(\det\alpha)(0)=(\det\alpha)(0,x)\neq0$
in $\ A(0)$. 

Let $\alpha^\#$ denote 
the adjoint matrix of $\alpha$; i.e., an $r_{\mm}\times r_{\mm}$ matrix with 
$\alpha^\#\cdot\alpha=\alpha\cdot\alpha^\#=(\det\alpha)\cdot\mathrm{Id}_{r_{\mm}}$.

Now, take $g:=\det\alpha$, $l:=q-r_{\mm}$, and let $\psi$ be the restriction of
$\Psi:A^{r_{\mm}}\oplus A^l\to F\ $ to $\ \{0\}^{r_{\mm}}\oplus A^l \cong A^l$.
Then $g(0,x)\neq0$. The condition $g\cdot F\subset\im\psi$ is equivalent to 
saying that, for every vector $(\vr,\sigma)\in A^{r_{\mm}}\oplus A^l$, 
there exists 
$\sigma'\in A^l$ such that $\Psi(g\cdot(\vr,\sigma))=\Psi((0,\sigma'))$, or,
equivalently, that 
$g\cdot A^q\subset\ker\Psi+(\{0\}^{r_\mm}\oplus A^l)=\im\Phi+(\{0\}^{r_\mm}\oplus A^l)$. But the 
latter follows from the fact that $g\cdot A^{r_{\mm}}\subset\im\alpha$.

Finally, by the choice of $\psi$, $\sigma\in\ker\psi$ if and only if 
$(0,\sigma)\in\im\Phi\cap(\{0\}^{r_\mm}\oplus A^l)$. Then $(0,\sigma)=\Phi((\xi,\eta))$, for 
some $(\xi,\eta)\in A^{r_{\mm}}\oplus A^{p-r_{\mm}}$ with 
$\alpha \xi + \beta \eta=0$. By 
the choice of $r_{\mm}$, every row of $[\gamma, \delta]$ is an $A(0)$-linear 
combination 
of the rows of $[\alpha, \beta]$ modulo $\mm$. Hence 
$\alpha \xi + \beta \eta =0$ implies that 
$\gamma \xi+\delta \eta \in\mm\cdot A^l$; i.e., 
that $\sigma\in \mm\cdot A^l$.
\end{proof}

Theorem \ref{thm:main}(B) is the main result of this article. We will
prove it in Section~\ref{sec:proof-main} below.

\begin{remark}
\label{rem:identity}
Throughout the paper, we will use the fact that all entries of the matrix 
$g\cdot\delta-\gamma\cdot\alpha^\#\cdot\beta$ are 
$(r_{\mm}+1)\times(r_{\mm}+1)$ minors of $\Phi$. This is an immediate 
consequence of the following matrix identity: For any $q \times p$ block
matrix (\ref{eqn:block}), where $\alpha$ is of size $r \times r$, 
\begin{equation}
\label{enq:identity}
g\cdot\Phi=\begin{bmatrix}\alpha & 0\\ \gamma & \mathrm{Id}_{q-r}\end{bmatrix}\cdot \begin{bmatrix}g\cdot\mathrm{Id}_r & \alpha^\#\cdot\beta\\ 0 & g\cdot\delta-\gamma\cdot\alpha^\#\cdot\beta\end{bmatrix}\,,
\end{equation}
where $g=\det\alpha$.
\end{remark}

\section{Applications: local flattener, openness of flatness, generic flatness.}
\label{sec:applications}

\begin{theorem}[{Hironaka's local flattener~\cite{H}}]
\label{thm:Hironaka}
Let $\vp:Z\to W$ be a morphism of analytic space-germs, where $W$ is regular.
Let $F$ be a finite $\OO_Z$-module.
Then there exists a unique analytic subgerm $P$ of $W$ (i.e., a unique local 
analytic $\KK$-algebra $\OO_P$, which is a quotient of $\OO_W$) such that:
\begin{itemize}
\item[(1)] $F \antens_{\OO_W} \OO_P$ is $\OO_P$-flat.
\item[(2)] Let $\lb_P:P\to W$ denote the embedding. Then, for every morphism 
$\lb: T \to W$ of germs of analytic spaces such that $F \antens_{\OO_W}\OO_T$ 
is $\OO_T$-flat, there exists a unique morphism $\mu:T\to P$ such that $\lb=\lb_P\circ\mu$.
\end{itemize}
\end{theorem}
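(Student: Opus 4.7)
The plan is to work within the algebraic setup of Section \ref{sec:intro}: since $W$ is regular, take $\OO_W = R = \KK\{y\}$, realize $F$ as a finitely generated $A$-module with $A = R\{x\}$, and construct an ideal $\II_F \subset R$ so that $\OO_P := R/\II_F$ satisfies the two conclusions. Uniqueness of $P$ then follows formally from the universal property (2). The construction and verification proceed by induction on the number of $x$-variables $m$ (the fibre-dimension), with Theorem \ref{thm:main} supplying the inductive step.

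To construct $\II_F$, apply Theorem \ref{thm:main}(A) to obtain $g \in A$, $l \in \N$, and $\psi : A^l \to F$ with $g(0,x) \neq 0$, $g \cdot F \subset \im\psi$, and $\ker\psi \subset \mm \cdot A^l$. Let $J_\psi \subset R$ be the smallest ideal satisfying $\ker\psi \subset J_\psi \cdot A^l$: writing a finite generating set of $\ker\psi$ componentwise as power series in $x$ with coefficients in $R$, $J_\psi$ is the ideal of $R$ generated by all these coefficients; its well-definedness and minimality rest on Noetherianity of $R$ together with flatness of $R \to A$ (giving $(J \cap J') \cdot A = JA \cap J'A$). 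In the unit case $g(0,0) \neq 0$ --- in particular when $m = 0$ --- the map $\psi$ is surjective and $F/\im\psi = 0$, so set $\II_F := J_\psi$. Otherwise, invoke Remark \ref{rem:fbd-reduction}: after a generic linear change of $x$-coordinates and Weierstrass division, $F/\im\psi$ is a finite $R\{\tx\}$-module in $\tx = (x_1,\dots,x_{m-1})$, so by induction it admits a flattener $R/\II'$; set $\II_F := J_\psi + \II'$.

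Conclusion (1) is then verified via Theorem \ref{thm:main}(B) applied with $J := \II_F$: its condition (1) holds by the defining property of $J_\psi$, and its condition (2) holds either vacuously (unit case, where $F/\im\psi = 0$) or by the inductive hypothesis --- the $R/\II'$-flatness of $(F/\im\psi) \tensR R/\II'$ is preserved under base change to the further quotient $R/\II_F$. For (2), the case of a closed embedding $\lb : T = V(J) \hookrightarrow W$ is immediate: flatness of $F \tensR R/J$ combined with Theorem \ref{thm:main}(B) forces $J_\psi \subset J$ and $\II' \subset J$ (the latter by the inductive universal property of $\II'$ applied to the induced flatness of $(F/\im\psi) \tensR R/J$), hence $\II_F \subset J$ and $\lb$ factors uniquely through $\lb_P$.

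The principal obstacle is the universal property (2) for an arbitrary morphism $\lb : T \to W$, where $T$ need not be a closed subgerm of $W$. The approach is the graph factorization $\lb = \pi_W \circ \Gamma_\lb$ with $\Gamma_\lb : T \hookrightarrow T \times W$ the closed embedding $t \mapsto (t, \lb(t))$. The hypothesis $\OO_T$-flatness of $F \antens_{\OO_W} \OO_T$ is reinterpreted as $\OO_T$-flatness of the pullback $F' := F \antens_{\OO_W} \OO_{T \times W}$ after the closed-embedding base change by $\Gamma_\lb$. Applying the inductive construction and closed-embedding case of (2) to $F'$ over $\OO_{T \times W}$ --- invoking a parametric version of Theorem \ref{thm:main} in which the base ring is an arbitrary local analytic $\KK$-algebra rather than only $\KK\{y\}$ --- the flattener ideal of $F'$ is identified with the extension of $\II_F$ to $\OO_{T \times W}$, and $\Gamma_\lb^*(\II_F \cdot \OO_{T \times W}) = 0$ unwinds to $\lb^*(\II_F) = 0$. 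Uniqueness of the resulting $\mu : T \to P$ is automatic since $\lb_P$ is a closed embedding.
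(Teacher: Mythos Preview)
Your construction of $\II_F$ and your verification of (1) and of (2) for closed embeddings coincide with the paper's argument essentially line for line. The divergence, and the gap, is in your final paragraph. To pass from a general $\lb:T\to W$ to the closed-embedding case you invoke a ``parametric version of Theorem~\ref{thm:main} in which the base ring is an arbitrary local analytic $\KK$-algebra'' and then assert that the flattener ideal of $F' = F\antens_{\OO_W}\OO_{T\times W}$ over the base $T\times W$ is the extension of $\II_F$. Neither statement is proved in the paper: Theorem~\ref{thm:main} is formulated and established only for $R=\KK\{y\}$, the base $T\times W$ is not regular in general so the theorem you are in the process of proving does not yet apply to it, and the compatibility of local flatteners with base change is exactly the sort of statement that the universal property (2) is meant to deliver, not something you can assume en route to it. As written, the argument is circular or at best rests on unproved extensions.

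The paper avoids this entirely via Remark~\ref{rem:embed}: if $F\antens_{\OO_W}\OO_T$ is $\OO_T$-flat, then since flatness is preserved under base change (here to the subring $\im\lb^*\cong\OO_W/\ker\lb^*$ of $\OO_T$, citing \cite[Prop.\,6.8]{H}), the module $F\antens_{\OO_W}(\OO_W/\ker\lb^*)$ is $(\OO_W/\ker\lb^*)$-flat. This reduces (2) immediately to the closed-embedding case $J=\ker\lb^*$, which you have already handled; the factorization $\mu$ is then the composite $T\to V(\ker\lb^*)\hookrightarrow P$, and its uniqueness follows because $\lb_P^*$ is surjective. Replace your last paragraph with this one-line reduction and the proof is complete.
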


\begin{remark}
\label{rem:embed}
Suppose that $\lb:T\to W$ is a morphism such that $F \antens_{\OO_W}\OO_T$ is 
$\OO_T$-flat. Since flatness is preserved by base change (see 
\cite[Prop.\,6.8]{H}), it follows that 
$(F \antens_{\OO_W}\OO_T)\antens_{\OO_T}S$ is $S$-flat, for every subring $S$ 
of $\OO_T$. In particular, identifying $\OO_W/\ker\lb^*$ with $\im\lb^*$, 
we get that $F \antens_{\OO_W}(\OO_W/\ker\lb^*)\,\cong\,
(F \antens_{\OO_W}\OO_T)\antens_{\OO_T}(\OO_W/\ker\lb^*)$ is 
$(\OO_W/\ker\lb^*)$-flat. Therefore, in Theorem~\ref{thm:Hironaka} it suffices 
to consider an embedding $\lb:T \to W$, and to show that there is an ideal 
$I(F)$ in $\OO_W$ such that $F \antens_{\OO_W}(\OO_W/J)$ is $\OO_W/J$-flat 
if and only if $I(F)\subset J$.

The germ $P$ is called the \emph{local flattener} of $F$ (with respect to 
$\vp$), and $I(F)$ is the \emph{ideal of the local flattener}.
\end{remark}

\begin{proof}[Proof of Theorem~\ref{thm:Hironaka}]
The uniqueness of $P$ is automatic, since $\lb_P^*:\OO_W\to\OO_P$ is surjective.

By regularity of $W$, we can identify $\OO_W$ with the ring $R=\KK\{y\}$ of 
convergent power series in  $y=(y_1,\dots,y_n)$.
Assume that $Z$ is a subgerm of $\KK^m_0$. Using the graph of $\vp$ to embed $Z$ 
in $W\times\KK^m$, we can think of $\OO_Z$ as a quotient ring of $A=R\{x\}$, 
where $x=(x_1,\dots,x_m)$. Then $F$ is a finitely generated $A$-module. We 
will proceed by induction on $m$, the number of the $x$-variables.

Choose $g\in A$ and $\psi:A^l\to F$ satisfying Theorem~\ref{thm:main}(A). 
Let $J(F)$ be the ideal in $R$ generated by the coefficients of (the expansions
in $x$ of) the elements in $\ker\psi$; i.e., the unique minimal ideal $J$ in 
$R$ satisfying $\ker\psi\subset J\!\cdot\!A^l$. If $F=\im\psi$ (which is the 
case if $m=0$, since then $g$ is invertible in $A$), then 
Theorem~\ref{thm:main}(B) implies that $J(F)$ is the ideal of the local 
flattener of $F$. If $F\neq\im\psi$, then $m>0$ and we may assume by the 
inductive hypothesis (see Remark~\ref{rem:fbd-reduction}) that there is a
local flattening 
ideal $I(F/\im\psi)$ in $\OO_W$. It follows that $I(F):=J(F)+I(F/\im\psi)$ 
is the ideal of the local flattener of $F$.
\end{proof}

Let $X$ and $Y$ be analytic spaces over $\KK$, and let $\vp:X\times Y\to Y$ 
be the canonical projection.
Let $\FF$ be a coherent $\OO_{Y\times X}$-module. For $(\eta,\xi)\in Y\times X$,
let $I_{\eta,\xi}(\FF)$ denote the ideal in $\OYeta$ of the local flattener of 
the stalk $\FF_{(\eta,\xi)}$ (with respect to $\vp$). Given any ideal $J$ in 
$\OYeta$, we let $J_{\eta'}$ denote the ideal generated by (a system of 
generators of) $J$ at nearby points $\eta' \in Y$. Then 
Theorem~\ref{thm:main} implies the following.

\begin{theorem}[Openness of flatness]
\label{thm:open-flat}
For every $(\eta,\xi)$ in a sufficiently small open neighbourhood of $(\eta_0,\xi_0)$ in $Y\times X$, with $\eta$ in a representative of the zero-set germ $\VV(I_{\eta_0,\xi_0}(\FF))$, we have
\[
I_{\eta,\xi}(\FF)\,\subset\,(I_{\eta_0,\xi_0}(\FF))_\eta\,.
\]
\end{theorem}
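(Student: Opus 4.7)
The plan is to proceed by induction on the fibre dimension $m$, mirroring the inductive construction of the local flattener in Theorem~\ref{thm:Hironaka}. By Remark~\ref{rem:embed} I may embed $(Y, \eta_0)$ in smooth space and identify $R := \OO_{Y, \eta_0} = \KK\{y\}$ and $A := \OO_{Y \times X, (\eta_0, \xi_0)} = R\{x\}$ with $x = (x_1, \ldots, x_m)$; write $F := \FF_{(\eta_0, \xi_0)}$ and $I := I_{\eta_0, \xi_0}(\FF)$. The base case $m = 0$ is handled by the argument below with condition (2) of Theorem~\ref{thm:main}(B) being vacuous, so I focus on the inductive step.

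Choose $g \in A$ and $\psi : A^l \to F$ as in Theorem~\ref{thm:main}(A) and pass to analytic representatives $\tilde g, \tilde\psi$ on a polyradius neighbourhood $U$ of $(\eta_0, \xi_0)$. After a generic linear change of $x$-coordinates, Weierstrass preparation writes $\tilde g = u \cdot g^\star$ with $u \in \OO(U)^\times$ and $g^\star$ a distinguished polynomial in $x_m$ of positive degree; consequently $\tilde g(\eta, x) \neq 0$ in $\OO_{X, \xi}$ for every $(\eta, \xi)$ near $(\eta_0, \xi_0)$, and $\GG := \FF|_U/\im\tilde\psi$ is a coherent $\OO_{Y \times \tilde X}$-module, where $\tilde X \subset \KK^{m-1}$ is the image of $X$ under projection to the first $m-1$ coordinates. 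Shrinking $U$ further, coherence of $\ker\tilde\psi \subset \OO_U^l$ produces finitely many sections $\rho_1, \ldots, \rho_N \in \OO(U)^l$ generating each of its stalks; the $x$-coefficients of the $\rho_j$ are analytic functions of $y$ near $\eta_0$ and generate a coherent ideal sheaf $\tilde J$ on $Y$ with $\tilde J_{\eta_0} = J(F)$ and $\tilde J_\eta = (J(F))_\eta$ at nearby $\eta$.

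Now fix $(\eta, \xi)$ in a sufficiently small neighbourhood of $(\eta_0, \xi_0)$ with $\eta \in \VV(I)$, and let $\tilde\xi$ be its projection to $\tilde X$. Since $I = J(F) + I(F/\im\psi)$, $\eta$ lies in $\VV(J(F))$ and every $x$-coefficient of every $\rho_j$ vanishes at $\eta$; hence $\rho_{j, (\eta, \xi)} \in \mm_\eta \cdot A^l_{(\eta, \xi)}$, whence $\ker\tilde\psi_{(\eta, \xi)} \subset \mm_\eta \cdot A^l_{(\eta, \xi)}$. Together with $\tilde g(\eta, x) \neq 0$ and $\tilde g \cdot F_{(\eta, \xi)} \subset \im\tilde\psi_{(\eta, \xi)}$ inherited from $U$, this shows that $\tilde g, \tilde\psi$ satisfy Theorem~\ref{thm:main}(A) as germs at $(\eta, \xi)$. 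By minimality of the local flattener, it then suffices to verify conditions (1) and (2) of Theorem~\ref{thm:main}(B) with $J := (I)_\eta$: condition (1) holds because the $\rho_j$ have $x$-coefficients in $\tilde J_\eta = (J(F))_\eta \subset (I)_\eta$; for (2), the inductive hypothesis applied to $\GG$ at $(\eta_0, \tilde\xi_0)$ (whose flattener is $I(F/\im\psi)$) yields $I_{\eta, \tilde\xi}(\GG) \subset (I(F/\im\psi))_\eta \subset (I)_\eta$, giving flatness of $\GG_{(\eta, \tilde\xi)} \tensR R_\eta/(I)_\eta$ over $R_\eta/(I)_\eta$, and since $\GG_{(\eta, \tilde\xi)}$ decomposes as an $R_\eta$-module into the direct sum of the stalks $(F/\im\psi)_{(\eta, \xi')}$ over preimages $\xi' \in X$ of $\tilde\xi$ (a standard fact for finite $R\{\tilde x\}$-modules), flatness is inherited by the summand at $\xi$. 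The main subtle point I anticipate is this last step --- reconciling the inductive hypothesis, which governs the flattener of the pushforward $\GG$ over $Y \times \tilde X$, with the flatness required at a preimage point $(\eta, \xi) \in Y \times X$ via the semi-local decomposition of the stalk.
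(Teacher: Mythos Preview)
Your proof is correct and follows essentially the same approach as the paper's: induction on the fibre dimension $m$, spreading the data $g,\psi$ of Theorem~\ref{thm:main}(A) to nearby points, verifying conditions (1) and (2) of Theorem~\ref{thm:main}(B) there, and handling the inductive step via Weierstrass reduction and the direct-summand argument for the pushforward. The only organizational difference is that you invoke the explicit decomposition $I = J(F) + I(F/\im\psi)$ from the proof of Theorem~\ref{thm:Hironaka} to separately control the two conditions, whereas the paper instead applies Theorem~\ref{thm:main}(B) at $(\eta_0,\xi_0)$ to obtain \eqref{eq:cond1}--\eqref{eq:cond2} and then spreads those; and the paper treats the case $g(\eta,\xi)\neq 0$ explicitly rather than absorbing it into the direct-summand step.
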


\begin{remark}[{Douady's openness of flatness~\cite{Dou}}]
\label{rem:Douady}
Let $\vp:X\to Y$ be a morphism of analytic spaces, and let $\FF$ be a coherent sheaf of $\OO_X$-modules.
Let $J$ be a coherent sheaf of ideals in $\OO_Y$, and let $Z$ be the closed 
analytic subspace of $Y$ defined by $J$ (i.e., $\OO_Z=\OO_Y/J$ and $|Z|=\supp(\OO_Y/J)$).
Then Theorem~\ref{thm:open-flat} implies that
\[
N_X(Z)=\{\xi\in\vp^{-1}(|Z|):\,\FF_\xi\antens_{\OO_{Y,\vp(\xi)}}\OO_{Z,\vp(\xi)}\mathrm{\ is\ not\ }\OO_{Z,\vp(\xi)}\mathrm{-flat}\,\}
\]
is a closed subset of $|X|$. In particular, for $Z=Y$, the latter implies 
openness of the set of points $\xi\in X$ with the property that $\FF_\xi$ is a 
flat $\OO_{Y,\vp(\eta)}$-module. This result is due to Douady~\cite{Dou} and 
is the classical form of ``openness of flatness''.
\end{remark}

\begin{proof}[Proof of Theorem~\ref{thm:open-flat}]
As in the proof of Theorem~\ref{thm:Hironaka}, we proceed by induction on the
fibre-dimension $m$ of $\vp:X\times Y\to Y$.
Using Theorem \ref{thm:main}(A) with $F=\FF_{(\eta_0,\xi_0)}$, we can choose
neighbourhoods $U$ of $\xi_0$ and $V$ of $\eta_0$, a function $g$ analytic on 
$V\times U$, and a morphism $\psi:\OO^l_{V\times U}\to\FF|_{V\times U}$ of 
$\OO_{V\times U}$-modules, such that $g(\eta_0,x)\neq0$, 
$g_{(\eta_0,\xi_0)}\!\cdot\FF_{(\eta_0,\xi_0)}\subset(\im\psi)_{(\eta_0,\xi_0)}$
and $(\ker\psi)_{(\eta_0,\xi_0)}\subset\mm_{V,\eta_0}\cdot\OO^l_{V\times U,(\eta_0,\xi_0)}$.
Since our problem is local, we can assume that $U$ (resp. $V$) is an open 
polydisc in $\C^m$ (resp. $\C^n$) centred at $\xi_0$ (resp. $\eta_0$).
(After shrinking $V$ if necessary), let $J$ be a coherent $\OO_V$-ideal 
such that $J_{\eta_0} = I_{\eta_0,\xi_0}(\FF)$; we can assume that 
$J_\eta=(I_{\eta_0,\xi_0}(\FF))_\eta$ for all $\eta\in V$. Let $Z$ denote the 
closed analytic subspace of $V$ defined by $J$; i.e., $|Z|$ is a 
representative in $V$ of the zero-set germ $\VV(I_{\eta_0,\xi_0}(\FF))$. 
Then Theorem~\ref{thm:main}(B) implies that
\begin{gather}
\label{eq:cond1}
(\ker\psi)_{(\eta_0,\xi_0)}\subset\,J_{\eta_0}\cdot\,\OO^l_{V\times U,(\eta_0,\xi_0)}\,,\\
\label{eq:cond2}
(\FF/\im\psi)_{(\eta_0,\xi_0)}\antens_{\OO_{Y,\eta_0}}\OO_{Z,\eta_0}\ \,\mathrm{is\ }\, \OO_{Z,\eta_0}\mathrm{-flat}\,.
\end{gather}
It follows (after shrinking $U$ and $V$ if needed) that  
$g(\eta,x)\neq0$ for all $\eta\in V$ and $g\!\cdot\!\FF\subset\im\psi$. Then
\eqref{eq:cond1} implies
\begin{equation}
\label{eq:cond3}
(\ker\psi)_{(\eta,\xi)}\subset\,J_\eta\!\cdot\OO^l_{V\times U,(\eta,\xi)}\subset\,\mm_{V,\eta}\!\cdot\OO^l_{V\times U,(\eta,\xi)}\,,
\end{equation}
for all $(\eta,\xi)\in V\times U$ with $\eta\in|Z|$. 

If $g(\eta,\xi)\neq0$ (which is the case if $m=0$), then,
 by Theorem~\ref{thm:main}(B), the first inclusion of \eqref{eq:cond3} 
implies that $I_{\eta,\xi}(\FF)\subset J_\eta=(I_{\eta_0,\xi_0}(\FF))_\eta$, 
as required.

Otherwise $g(\eta_0,\xi_0)=0$ (and $m>0$). By Theorem~\ref{thm:main}, it 
suffices to show that 
$(\FF/\im\psi)_{(\eta,\xi)}\antens_{\OO_{Y,\eta}}\OO_{Z,\eta}$ is 
$\OO_{Z,\eta}$-flat, provided $\eta\in|Z|$ and $g(\eta,\xi)=0$.
After a linear change of the $x$-variables, we can assume that $U=U'\times U''$,
where $U'$ is spanned by the variables $\tx=(x_1,\dots,x_{m-1})$ and $U''$ is 
spanned by $x_m$, and $g_{(\eta_0,\xi_0)}$ is regular in $x_m-\xi_{0m}$, 
where $\xi_{0m}$ is the last coordinate of $\xi_0$.
By Remark~\ref{rem:fbd-reduction}, after shrinking $U$ if needed, we can 
consider $\FF/\im\psi$ as a coherent $\OO_{V\times U'}$-module; we denote it
$\tilde\FF$. Let $\txi_0$ denote the $\tx$-coordinates of $\xi_0$. 
Then $\tilde\FF_{(\eta_0,\txi_0)}=(\FF/\im\psi)_{(\eta_0,\xi_0)}$ (since 
$g(\eta_0,\txi_0,\cdot)$ vanishes only at $\xi_{0m}$), and hence 
$\tilde\FF_{(\eta_0,\txi_0)}\antens_{\OO_{Y,\eta_0}}\OO_{Z,\eta_0}$
is $\OO_{Z,\eta_0}$-flat, by \eqref{eq:cond2}. By the inductive hypothesis, 
$\tilde\FF_{(\eta,\txi)}\antens_{\OO_{Y,\eta}}\OO_{Z,\eta}$ is 
$\OO_{Z,\eta}$-flat for every $(\eta,\txi)\in|Z|\times U'$. To complete the 
proof, observe that for any $(\eta,\xi)\in|Z|\times U$ with $g(\eta,\xi)=0$, 
$(\FF/\im\psi)_{(\eta,\xi)}$ is a direct summand of $\tilde\FF_{(\eta,\txi)}$. 
Hence 
$(\FF/\im\psi)_{(\eta,\xi)}\antens_{\OO_{Y,\eta}}\OO_{Z,\eta}$ is 
$\OO_{Z,\eta}$-flat, as a direct summand of 
$\tilde\FF_{(\eta,\txi)}\antens_{\OO_{Y,\eta}}\OO_{Z,\eta}$, 
by \cite[Ch.\,1, \S\,2.3, Prop.\,2]{Bou}.
\end{proof}

\begin{remark}[{Frisch's generic flatness theorem \cite{Fri}}]
\label{rem:Frisch}
Let $\vp: X \to Y$ denote a morphism of complex-analytic spaces and
let $\FF$ denote a coherent sheaf of $\OO_X$-modules. Frisch's
\emph{generic flatness theorem} asserts that the \emph{non-flat locus}
$\Sigma := \{\xi \in X: \FF_\xi \text{ is not } \OO_{Y,\vp(\xi)}\text{-flat}\}$
is a closed analytic subset of $X$, and, if $X$ is reduced, then $\vp(\Sigma)$
is nowhere dense in $Y$. Frisch's theorem follows from Theorem 
\ref{thm:open-flat} above together with the fact that $\Sigma$ is a
constructible subset of $X$. See \cite[Thm.\,7.15]{BMPisa} for a constructive
elementary proof of the latter.
\end{remark}

\section{Proof of the main theorem}
\label{sec:proof-main}

We use the notation preceding Theorem \ref{thm:main}. 
Consider a presentation \eqref{eqn:pres_F} of $F$ as an $A$-module.
Applying $\tensR R/\mm$, we get  a homomorphism $\Phi_{\mm}:A(0)^p\to A(0)^q$ 
of $A(0)$-modules such that $F\tensR R/\mm\cong\mathrm{coker}(\Phi_{\mm})$. 
Set $r_{\mm}:=\rank(\Phi_{\mm})$. We can assume that $\Phi$ is given by
a block matrix \eqref{eqn:block} and $g:=\det\alpha$ 
satisfies $g(0,x)\neq0$. For an ideal $J$ in $R$, define
$$
\ker_J\Phi:=\{\zeta\in A^p:\Phi(\zeta)\in J\!\cdot\!A^q\}\,,
$$
and
$$
\rank_J\Phi:=\min\{r\geq1:\ \mathrm{all}\ (r+1)\times(r+1)\mathrm{\ minors\ of\ }\Phi\mathrm{\ belong\ to\ }J\!\cdot\!A\}\,.
$$

Our proof of Theorem~\ref{thm:main}(B) is based on showing that property (1) of the theorem is equivalent to equalities $q-l=\rank_J\Phi=\rank\Phi_{\mm}$, and that property (2) of the theorem is equivalent to $R/J$-flatness of $\GG\tensR R/J$, where
\[
\GG:=A^{r_{\mm}}/[g\cdot A^{r_{\mm}}+\im(\alpha^\#\cdot\beta)]\,.
\]
The latter equivalence is obvious if $g$ is a unit in $A$, since both $F/\im\psi$ and $\GG$ are zero in this case.
Suppose then that $g$ is not invertible in $A$, that is, $g(0,0)=0$. Since $g(0,x)\neq0$, then after a (generic and linear) change of the $x$-coordinates to $(\tx,x_m)$, where $\tx=(x_1,\dots,x_{m-1})$, we have $g(0,0,x_m)\neq0$. By the Weierstrass Preparation Theorem, $g=u\cdot P$, where $u(0,0)\neq0$ and $P(y,x)=x_m^d+\sum_{i=1}^dp_i(y,\tx)\cdot x_m^{d-i}$, with $p_i(0,0)=0$.

The ring $A/g\!\cdot\!A$ is a finite free $R\{\tx\}$-module.
We shall describe the action of $\alpha^\#\cdot\beta:A^{p-r_{\mm}}\to A^{r_{\mm}}$ modulo $g$ as linear mapping of finite $R\{\tx\}$-modules. Given $\eta\in A^{p-r_{\mm}}$, Weierstrass division by $g$ gives $\eta\equiv\sum_{j=1}^d\eta_jx_m^{d-j}$ (mod $g$), with $\eta_j\in R\{\tx\}^{p-r_{\mm}}$. Applying Weierstrass division by $g$ to the entries of $\alpha^\#\cdot\beta$, we form matrices $T_i=T_i(y,\tx)$, $1\leq i\leq d$, such that
\begin{equation}
\label{eq:mod1g}
(\alpha^\#\cdot\beta)(\eta)\equiv(\sum_{i=1}^dT_i\cdot x_m^{d-i})\cdot(\sum_{j=1}^d\eta_j\cdot x_m^{d-j})\quad (\mathrm{mod\;}g)\,.
\end{equation}
Applying Euclid division by $P(y,x)$ (as a monic polynomial in $x_m$) to the latter product, we obtain matrix $G=(G_{ij})_{1\leq i,j\leq d}$, with block-matrices $G_{ij}$ of size $r_{\mm}\times(p-r_{\mm})$ and entries in $R\{\tx\}$, such that all entries of the matrix
\begin{equation}
\label{eq:mod2g}
(\sum_{i=1}^dT_i\cdot x_m^{d-i})\cdot(\sum_{j=1}^d\eta_j\cdot x_m^{d-j})\ -\ \sum_{1\leq i,j\leq d}G_{ij}\cdot\eta_j\cdot x_m^{d-i}
\end{equation}
are linear in the $\eta_j$ with coefficients in the ideal generated by $P(y,x)$ in the ring $R\{\tx\}[x_m]$. Then $\GG$ coincides with $R\{\tx\}^{r_{\mm}d}/\im\!G$ as $R\{\tx\}$-modules. With these preparations and modulo Lemma~\ref{lem:kernels} below, Theorem~\ref{thm:main}(B) is a consequence of the following:

\begin{proposition}
\label{prop:main}
Let $G:R\{\tx\}^{(p-r_{\mm})d}\to R\{\tx\}^{r_{\mm}d}$ be as above (or $G=0$ if $g(0,0)\neq0$). Then $\ker(\Phi_{\mm})=(\ker_J\Phi)(0)$ if and only if $\rank\Phi_{\mm}=\rank_J\Phi$ and $\ker(G_{\mm})=(\ker_J G)(0)$.
\end{proposition}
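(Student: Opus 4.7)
My plan is to unfold the equivalence using the block decomposition of $\Phi$, the matrix identity $g\Phi=L\cdot M$ of Remark \ref{rem:identity}, and Weierstrass division by $g$. Write $\zeta=(\xi,\eta)\in A^{r_\mm}\oplus A^{p-r_\mm}$. A first reduction: since $g(0,x)\neq 0$, Weierstrass preparation applied over the local ring $R/J$ shows $g$ is a non-zero-divisor on $A/J\cdot A$; combined with $\alpha\cdot\alpha^\#=g\cdot\mathrm{Id}$, this yields the reformulation that $\zeta\in\ker_J\Phi$ iff
\[
g\xi+\alpha^\#\beta\eta\in J\cdot A^{r_\mm}\quad\text{and}\quad (g\delta-\gamma\alpha^\#\beta)\eta\in J\cdot A^{q-r_\mm}.
\]
The entries of $g\delta-\gamma\alpha^\#\beta$ are $(r_\mm+1)$-minors of $\Phi$ (Remark \ref{rem:identity}), and a Cauchy--Binet expansion of $g\Phi=LM$ combined with the $g$-regularity mod $J$ shows that $\rank_J\Phi=r_\mm$ is equivalent to $g\delta-\gamma\alpha^\#\beta\in J\cdot A$ entrywise. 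Under this rank condition, the second reformulated condition is automatic and the first is equivalent to $\eta\bmod g\in\ker_J G$ via the Weierstrass isomorphism $A/gA\cong R\{\tx\}^d$.

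For the reverse direction $(\Leftarrow)$, given $\zeta_0=(\xi_0,\eta_0)\in\ker\Phi_\mm$, evaluating the first reformulated condition at $y=0$ shows $\eta_0\bmod g_\mm\in\ker G_\mm=(\ker_J G)(0)$. Lift to $\tilde\eta\in\ker_J G$ with $\tilde\eta(0)=\eta_0\bmod g_\mm$, lift the Weierstrass quotient of $\eta_0$ arbitrarily to $\eta'\in A^{p-r_\mm}$, and set $\eta:=g\eta'+\tilde\eta$; then $\eta(0)=\eta_0$ and $\eta\bmod g=\tilde\eta\in\ker_J G$. Writing the Weierstrass division $\alpha^\#\beta\tilde\eta=g q+r$ with $r=G(\tilde\eta)\in J\cdot R\{\tx\}^{r_\mm d}$ produces $\xi:=-(\alpha^\#\beta\eta'+q)\in A^{r_\mm}$ satisfying the first reformulated condition, and a direct computation at $y=0$ gives $\xi(0)=\xi_0$. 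The second reformulated condition is automatic by the rank hypothesis, so $\zeta=(\xi,\eta)\in\ker_J\Phi$ lifts $\zeta_0$.

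For the forward direction $(\Rightarrow)$, the inclusion $\ker G_\mm\subseteq(\ker_J G)(0)$ is the preceding construction run backwards: given $\bar\eta\in\ker G_\mm$, the pair $(-g_\mm^{-1}\alpha^\#_\mm\beta_\mm\bar\eta,\bar\eta)\in\ker\Phi_\mm$ (well-defined since $g_\mm$ is a non-zero-divisor in $A(0)$ and $\rank\Phi_\mm=r_\mm$) lifts by hypothesis to $\zeta\in\ker_J\Phi$, whose projection $\pi(\zeta)=\eta\bmod g$ lies in $\ker_J G$ and evaluates at $y=0$ to $\bar\eta$. Notably, this step uses only the first reformulated condition and so does not yet require the rank equality.

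The main obstacle is the forward-direction rank equality $\rank_J\Phi=r_\mm$. My plan is to apply the lifting hypothesis to the distinguished elements $\zeta_0=(-\alpha^\#_\mm\beta_\mm e_j,g_\mm e_j)\in\ker\Phi_\mm$ for each basis vector $e_j$ of $A(0)^{p-r_\mm}$: lifting to $\zeta=(\xi,\eta)\in\ker_J\Phi$ with $\eta(0)=g_\mm e_j$ and applying the second reformulated condition should, through an iterative refinement using the Krull intersection $\bigcap_N\mm^N(A/J\cdot A)=0$ in the local Noetherian ring $R/J$ (a convergent series with coefficients in every $(\mm/J)^N$ is zero), force each column of $g\delta-\gamma\alpha^\#\beta$ into $J\cdot A^{q-r_\mm}$. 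Making this iteration precise is where I expect Lemma \ref{lem:kernels}, referenced in the excerpt just before the proposition, to intervene.
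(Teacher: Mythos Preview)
Your reformulation of $\zeta\in\ker_J\Phi$ is exactly the paper's Lemma~\ref{lem:mod-J}, your equivalence between $\rank_J\Phi=r_\mm$ and the entrywise vanishing of $g\delta-\gamma\alpha^\#\beta$ modulo $J$ is Lemma~\ref{lem:6equiv}(i)$\Leftrightarrow$(iv), and your passage from full kernels to the projection onto the $\eta$-component is Remark~\ref{rem:proj-ker}. With these ingredients your $(\Leftarrow)$ argument and your $(\Rightarrow)$ argument for $\ker G_\mm\subset(\ker_JG)(0)$ are correct and follow the same path as the paper.

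The one substantive divergence is the forward-direction rank equality. The paper proves this as a separate Lemma~\ref{lem:ker2rank} by a direct, one-shot Cramer's-rule trick: lift a full-rank matrix $\xi(x)$ of generators of $\ker\Phi_\mm$ to $\Xi(y,x)$ with $\Phi\cdot\Xi\in J\cdot A$, then use a maximal minor of $\Xi$ (nonvanishing at $y=0$) to produce a relation $g'\Phi_1+\Phi_2\Gamma\in J\cdot A$, which shows $\rank_J(g'\Phi)\leq r_\mm$ in a single step; Lemma~\ref{lem:cancel} then removes the factor $g'$. Your iterative scheme also works, but the lemma it needs is Lemma~\ref{lem:cancel}, not Lemma~\ref{lem:kernels}. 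Writing $D:=g\delta-\gamma\alpha^\#\beta$, your lifts give $D\eta^{(j)}\in J\cdot A$ with $\eta^{(j)}-ge_j\in\mm\cdot A$; hence if $D\in(J+\mm^N)\cdot A$ entrywise, then $g\cdot De_j=D\eta^{(j)}-D(\eta^{(j)}-ge_j)\in(J+\mm^{N+1})\cdot A$, and Lemma~\ref{lem:cancel} applied to the ideal $J+\mm^{N+1}$ gives $De_j\in(J+\mm^{N+1})\cdot A$. Iterate and use Krull intersection in $R/J$ (together with $R$-flatness of $A$, so that $f\in I\cdot A$ precisely when all $x$-coefficients of $f$ lie in $I$) to conclude $D\in J\cdot A$. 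Lemma~\ref{lem:kernels} plays no role in this step; it is only used to translate the kernel equality into $R/J$-flatness of $F\tensR R/J$.
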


Before proving Proposition \ref{prop:main}, let us note that
the first equality of Proposition~\ref{prop:main} expresses $R/J$-flatness of 
$F$:

\begin{lemma}
\label{lem:kernels}
$F\tensR R/J$ is $R/J$-flat if and only if $(\ker_J\Phi)(0)=\ker(\Phi_{\mm})$.
\end{lemma}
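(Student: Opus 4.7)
\textbf{Plan for Lemma~\ref{lem:kernels}.} My plan is to reduce the flatness statement to a single $\mathrm{Tor}$-vanishing via the local criterion of flatness, and then to compute that $\mathrm{Tor}$ directly from the presentation \eqref{eqn:pres_F}. Throughout, since $R/J$ is a quotient of $R$ by an ideal, the analytic tensor product collapses to the ordinary one, so $F\tensR R/J = F/J\!\cdot\!F$ and $F\tensR R/\mm = F(0)$.

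The first step is to invoke the local criterion of flatness in the following form: $R/J$ is Noetherian local with residue field $R/\mm$, the ring $A/J\!\cdot\!A$ is a Noetherian local $R/J$-algebra whose maximal ideal contains $\mm\cdot(A/J\!\cdot\!A)$, and $F/JF$ is finitely generated over $A/J\!\cdot\!A$. In this situation, $F/JF$ is $R/J$-flat if and only if $\mathrm{Tor}_1^{R/J}(F/JF,\, R/\mm)=0$.

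The second step is to compute this $\mathrm{Tor}$ from the presentation \eqref{eqn:pres_FJ}. Since $A=R\{x\}$ is $R$-flat, $A^q/J\!\cdot\!A^q$ is $R/J$-flat, so applying $-\otimes_{R/J} R/\mm$ to the short exact sequence $0\to\im\Phi_J\to A^q/JA^q\to F/JF\to 0$ yields
\begin{equation*}
0 \to \mathrm{Tor}_1^{R/J}(F/JF,\, R/\mm) \to \im\Phi_J \otimes_{R/J} R/\mm \to A(0)^q \to F(0) \to 0.
\end{equation*}
By the first isomorphism theorem applied to $\Phi_J$, one has $\im\Phi_J \cong A^p/\ker_J\Phi$, and reducing modulo $\mm$ gives $\im\Phi_J\otimes_{R/J} R/\mm \cong A(0)^p/(\ker_J\Phi)(0)$. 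The induced map into $A(0)^q$ is $\Phi_\mm$ passed to the quotient (note the automatic inclusion $(\ker_J\Phi)(0)\subset\ker\Phi_\mm$, since $J\subset\mm$). Therefore
\begin{equation*}
\mathrm{Tor}_1^{R/J}(F/JF,\, R/\mm) \;\cong\; \ker\Phi_\mm / (\ker_J\Phi)(0),
\end{equation*}
which vanishes precisely when $(\ker_J\Phi)(0) = \ker\Phi_\mm$, proving the lemma.

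The main delicate point is the identification $\im\Phi_J\otimes_{R/J} R/\mm \cong A(0)^p/(\ker_J\Phi)(0)$: one needs to verify that the image of $\ker_J\Phi$ under the quotient map $A^p \to A(0)^p = A^p/\mm A^p$ coincides with the set $(\ker_J\Phi)(0)$ defined in the paper, i.e.\ that $(\mm A^p + \ker_J\Phi)/\mm A^p$ and $(\ker_J\Phi)(0)$ agree as submodules of $A(0)^p$. Once this tautological check is in place, everything else is formal manipulation of the long exact $\mathrm{Tor}$ sequence together with a standard application of the local flatness criterion.
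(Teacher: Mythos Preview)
Your proof is correct and follows essentially the same route as the paper's own argument: both reduce flatness to the vanishing of $\mathrm{Tor}_1^{R/J}(F\tensR R/J,\,R/\mm)$ via the local criterion, then compute this $\mathrm{Tor}$ from the short exact sequence $0\to A^p/\ker_J\Phi\to A^q/J\!\cdot\!A^q\to F\tensR R/J\to 0$ using $R/J$-flatness of $A^q/J\!\cdot\!A^q$, arriving at the identification $\mathrm{Tor}_1\cong\ker\Phi_\mm/(\ker_J\Phi)(0)$. The ``delicate point'' you flag is indeed tautological, since by the paper's definition $M(0)$ is precisely the image of $M$ under the evaluation map $A^p\to A^p/\mm A^p=A(0)^p$.
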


\begin{proof}
By definition of $\ker_J\Phi$, $\zeta\in\ker_J\Phi$ implies $\,\Phi(\zeta)\in\mm\!\cdot\!A^q$, and hence $\,\Phi_{\mm}(\zeta(0))=0$. Therefore, we always have $(\ker_J\Phi)(0)\subset\ker\Phi_{\mm}$. On the other hand, by a well-known criterion for flatness (see, e.g., \cite[Prop.\,6.2]{H}), $F\tensR R/J$ is $R/J$-flat iff $\anTor^{R/J}_1(F\tensR R/J, R/\mm)=0$.

By (\ref{eqn:pres_FJ}), we have $F\tensR R/J\cong(A^q/J\!\cdot\!A^q)/\Phi_J(A^p/J\!\cdot\!A^p)$. Notice that $\ker(\Phi_J)=(\ker_J\Phi)/J\!\cdot\!A^p$. Hence $\Phi_J(A^p/J\!\cdot\!A^p)\cong (A^p/J\!\cdot\!A^p)/\ker(\Phi_J)\cong A^p/\ker_J\Phi$, and we get from (\ref{eqn:pres_FJ}) a short exact sequence
\[
0\,\to\,A^p/\ker_J\Phi\,\to\,A^q/J\!\cdot\!A^q\,\to\,F\tensR R/J\,\to\,0\,.
\]
The induced long exact sequence of the $\anTor^{R/J}$ modules ends with
\begin{multline}
\notag
0\,\to\,\anTor^{R/J}_1(F\tensR R/J,R/\mm)\,\to\,(A^p/\ker_J\Phi)\antens_{R/J}R/\mm\\
\stackrel{\lb}{\rightarrow}\,(A^q\tensR R/J)\antens_{R/J}R/\mm\,\to\,(F\tensR R/J)\antens_{R/J}R/\mm\,\to\,0\,,
\end{multline}
where the leftmost term is zero by $R/J$-flatness of $A^q\tensR R/J$ (which follows from the $R$-flatness of $A^q$).
Thus $F\tensR R/J$ is $R/J$-flat if and only if
\[
A(0)^p/(\ker_J\Phi)(0)\cong(A^p/\ker_J\Phi)\antens_{R/J}R/\mm\,\stackrel{\lb}{\longrightarrow}\,A^q\antens_{R/J}R/\mm\cong A(0)^q
\]
is injective. By (\ref{eqn:pres_Fm}), the latter condition is equivalent to $(\ker_J\Phi)(0)\supset\ker(\Phi_{\mm})$, which completes the proof of the lemma.
\end{proof}
\medskip

The proof of Proposition~\ref{prop:main} depends on several lemmas following. 
First, we establish a useful 
cancellation law.

\begin{lemma}
\label{lem:cancel}
Let $J$ be an ideal in $R$, and let $g,\zeta\in A$ be such that $g(0,x)\neq0$ in $A(0)=\KK\{x\}$ and $g\cdot\zeta\in J\!\cdot\!A$. Then $\zeta\in J\!\cdot\!A$.
\end{lemma}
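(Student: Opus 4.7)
\medskip

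\textbf{Proof plan for Lemma~\ref{lem:cancel}.}

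The plan is to reduce to the case where $g$ is a Weierstrass polynomial in one of the $x$-variables and then exploit the uniqueness clause of the Weierstrass division theorem. Since $g(0,x)\neq 0$, after a generic linear change in the $x$-coordinates (writing $x=(\tx,x_m)$ with $\tx=(x_1,\dots,x_{m-1})$) we may assume $g(0,\dots,0,x_m)\neq 0$, so that $g$ is $x_m$-regular of some order $d$. Weierstrass preparation then yields $g=u\cdot P$ with $u\in A$ a unit and $P\in R\{\tx\}[x_m]$ a distinguished polynomial of degree $d$ in $x_m$. Since $u$ is invertible in $A$, the hypothesis $g\cdot\zeta\in J\!\cdot\!A$ is equivalent to $P\cdot\zeta\in J\!\cdot\!A$, and it suffices to prove the cancellation law with $P$ in place of $g$.

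Assume then $P\cdot\zeta\in J\!\cdot\!A$, and write
\[
P\cdot\zeta\,=\,\sum_k j_k\cdot a_k,\qquad j_k\in J,\ a_k\in A\,.
\]
Apply Weierstrass division by $P$ to each $a_k$ to get $a_k=P\cdot Q_k+r_k$ with $Q_k\in A$ uniquely determined and $r_k\in R\{\tx\}[x_m]$ a polynomial in $x_m$ of degree strictly less than $d$. Substituting, we obtain
\[
P\cdot\zeta\,=\,P\cdot\Bigl(\sum_k j_k\,Q_k\Bigr)\,+\,\sum_k j_k\,r_k\,.
\]
Because each $j_k$ lies in $R\subset R\{\tx\}$, the remainder term $\sum_k j_k r_k$ is still a polynomial in $x_m$ of degree less than $d$ with coefficients in $R\{\tx\}$; hence the displayed expression is a legitimate Weierstrass decomposition of $P\cdot\zeta$ with respect to $P$.

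On the other hand, $P\cdot\zeta = P\cdot\zeta + 0$ is also such a decomposition, with quotient $\zeta$ and zero remainder. By the uniqueness clause of the Weierstrass division theorem, the quotients and remainders must coincide; in particular $\zeta=\sum_k j_k\,Q_k\in J\!\cdot\!A$. The one place that requires care is keeping the two Weierstrass decompositions within the correct class of remainders, which is why it is essential that the scalars $j_k$ lie in $R$ (and thus in $R\{\tx\}$) rather than in $A$; otherwise $\sum_k j_k r_k$ need not have degree $<d$ in $x_m$. No further difficulty is anticipated.
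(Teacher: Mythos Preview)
Your argument is correct. The key device---reducing to a distinguished polynomial via Weierstrass preparation and then invoking the uniqueness clause of Weierstrass division---works exactly as you describe, and your observation that the scalars $j_k$ lie in $R\subset R\{\tx\}$ (so that $\sum_k j_k r_k$ is again a polynomial of degree $<d$ in $x_m$ with coefficients in $R\{\tx\}$) is precisely what makes the uniqueness comparison legitimate.

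The paper takes a different route. It passes to the formal power series ring $\tilde A=R[[x]]$, expands $\zeta=\sum_\nu\zeta_\nu x^\nu$ with $\zeta_\nu\in R$, and localizes at the prime ideal $\mm\tilde A$. The hypothesis $g(0,x)\neq0$ says exactly that $g\notin\mm\tilde A$, so $g$ becomes a unit in the localization and one concludes $\zeta\in(J\tilde A)_{\mm\tilde A}$; freeness of $\tilde A$ over $R$ then lets one read this off coefficientwise as $\zeta_\nu\in J$ for all $\nu$. Your approach avoids the excursion into formal series and localization, staying entirely in the convergent category and using only the Weierstrass tools already central to the paper---in that sense it is more self-contained and better matched to the article's constructive flavour. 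The paper's argument, by contrast, needs no coordinate change and no $x_m$-regularity, so it is a bit more direct once one is willing to invoke the algebraic input.
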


\begin{proof}
Write $\zeta=\sum_{\nu\in\N^m}\zeta_{\nu}x^{\nu}$, where $\zeta_{\nu}\in R$, and consider $g$ and $\zeta$ as elements of the ring $\tilde{A}:=R[[x]]$.
By assumption, $g\notin\mm\!\cdot\!\tilde{A}$. Hence, after localizing in $\mm\!\cdot\!\tilde{A}$, we get $\zeta_{\mm\tilde{A}}\in(J\tilde{A})_{\mm\tilde{A}}$, because $g_{\mm\tilde{A}}$ is invertible in $\tilde{A}_{\mm\tilde{A}}$.
Since $\tilde{A}$ is a free $R$-module, we have $\tilde{A}_{\mm\tilde{A}}\cong R_{\mm}[[x]]$, and hence $\zeta_{\mm\tilde{A}}\in(J\tilde{A})_{\mm\tilde{A}}$ if and only if, for all $\nu\in\N^m$, $(\zeta_{\nu})_{\mm}\in J_{\mm}$, that is, $\zeta_{\nu}\in J$.
Thus $\zeta\in J\!\cdot\!A$, as required.
\end{proof}

Recall that $r_{\mm}$ denotes the rank of $\Phi_{\mm}$ (in the notation at
the beginning of this section).

\begin{lemma}
\label{lem:6equiv}
Let $J$ be an ideal in $R$. Then the following conditions are equivalent:
\begin{itemize}
\item[(i)] $\rank\Phi_{\mm}=\rank_J\Phi$;
\smallskip

\item[(ii)] we can order the columns and rows of $\Phi$ so that
$\Phi$ has block form 
\eqref{eqn:block} with $\alpha$ of size $r\times r$,
$(\det\alpha)(0,x)\neq0$ and $\rank_J\Phi=r$;
\smallskip

\item[(iii)] we can order the columns and rows of $\Phi$
so that $\Phi$ has block form
\eqref{eqn:block}, where $\alpha$ has size 
$r\times r$, $(\det\alpha)(0,x)\neq0$, and all entries of 
$(\det\alpha)\cdot\delta-\gamma\cdot\alpha^\#\cdot\beta$ are in $J\!\cdot\!A$;
\smallskip

\item[(iv)] if $\Phi$ is a block matrix \eqref{eqn:block}, where $\alpha$ is 
of size $r_{\mm}\times r_{\mm}$ and $(\det\alpha)(0)\neq0$, then all entries 
of $(\det\alpha)\cdot\delta-\gamma\cdot\alpha^\#\cdot\beta$ are in 
$J\!\cdot\!A$;
\smallskip

\item[(v)] if $g\in A$, $g(0,x)\neq0$, and $A^q=A^r\oplus A^l$, where 
$g\cdot A^q\subset\im\Phi+(\{0\}^r\oplus A^l)$ and 
$\im\Phi\cap(\{0\}^r\oplus A^l)\subset\{0\}^r\oplus\mm\!\cdot\!A^l$, then 
$\im\Phi\cap(\{0\}^r\oplus A^l)\subset\{0\}^r\oplus J\!\cdot\!A^l$;
\smallskip

\item[(vi)] if $g\in A$, $g(0,x)\neq0$ and $\psi:A^l\to F$ is a homomorphism of $A$-modules such that $g\cdot F\subset\im\psi$ and $\ker\psi\subset\mm\!\cdot\!A^l$, then $\ker\psi\subset J\!\cdot\!A^l$.
\end{itemize}
\end{lemma}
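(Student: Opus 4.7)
The plan is to establish the six equivalences in three stages: the ``rank conditions'' (i)--(iv), the module-theoretic reformulation (v) $\Leftrightarrow$ (vi), and the bridge (iv) $\Leftrightarrow$ (v). For the first stage I would run the cycle (i) $\Rightarrow$ (ii) $\Rightarrow$ (iii) $\Rightarrow$ (i),(iv), with (iv) $\Rightarrow$ (iii) trivial. The step (i) $\Rightarrow$ (ii) uses the definition of $r_{\mm}$: pick an $r_{\mm}\times r_{\mm}$ submatrix of $\Phi$ whose determinant is not in $\mm\cdot A$, and reorder to make it $\alpha$; since $J\subset\mm$ forces $\rank_J\Phi\geq r_{\mm}$, (i) upgrades this to $\rank_J\Phi=r_{\mm}=r$. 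Then (ii) $\Rightarrow$ (iii) is immediate from Remark~\ref{rem:identity}, since the entries of $(\det\alpha)\delta-\gamma\alpha^\#\beta$ are specific $(r+1)\times(r+1)$ minors of $\Phi$. The technical core is (iii) $\Rightarrow$ (i): write $g\Phi=LU$ as in \eqref{enq:identity} and apply the Cauchy--Binet formula to an arbitrary $(r+1)\times(r+1)$ submatrix $\Phi'$ of $\Phi$. The block-triangular structure of $U$ (with $g\cdot\mathrm{Id}_r$ upper-left and a zero lower-left block) forces every nonzero $(r+1)\times(r+1)$ minor of $U$ to include at least one entry of the lower-right block $g\delta-\gamma\alpha^\#\beta$, hence to lie in $J\cdot A$; therefore $g^{r+1}\det\Phi'\in J\cdot A$, and Lemma~\ref{lem:cancel} applied $r+1$ times gives $\det\Phi'\in J\cdot A$. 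Combined with $r\leq r_{\mm}$ (from $(\det\alpha)(0,x)\neq0$) and $\rank_J\Phi\geq r_{\mm}$, the chain of inequalities collapses to $r=r_{\mm}=\rank_J\Phi$, which is (i); (iv) is the same statement applied to any block form of size $r_{\mm}\times r_{\mm}$.

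For (v) $\Leftrightarrow$ (vi), given $\psi:A^l\to F$ satisfying (vi)'s hypotheses, extend to a surjection $\Psi:A^q\to F$ whose restriction to $\{0\}^r\oplus A^l$ is $\psi$ (adjoining $r$ generators for $F/\im\psi$ in the first $r$ coordinates); letting $\Phi'$ present $\ker\Psi$, the hypotheses and conclusion of (vi) translate verbatim to those of (v) applied to $\Phi'$, via $\ker\psi\cong\im\Phi'\cap(\{0\}^r\oplus A^l)$ (projected) and $g\cdot F\subset\im\psi\Leftrightarrow g\cdot A^q\subset\im\Phi'+(\{0\}^r\oplus A^l)$. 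For the bridge, the direction (v) $\Rightarrow$ (iv) applies (v) with $g=\det\alpha$, $r=r_{\mm}$, and the natural splitting of the given block form; (a) holds by $\alpha\alpha^\#=g\cdot\mathrm{Id}$, and (b) holds because the entries of $g\delta-\gamma\alpha^\#\beta$ are $(r_{\mm}+1)\times(r_{\mm}+1)$ minors of $\Phi$, hence in $\mm\cdot A$, so that $(\det\alpha)w\in\mm\cdot A^l$ and Lemma~\ref{lem:cancel} gives $w\in\mm\cdot A^l$ for any $(0,w)\in\im\Phi$. Then taking $v_2$ arbitrary and $v_1=-\alpha^\#\beta v_2$, after rescaling $v_2$ by $g$, exhibits every column of $g\delta-\gamma\alpha^\#\beta$ as an element of $J\cdot A^l$. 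For (iv) $\Rightarrow$ (v), the first step is to show that $r=r_{\mm}$ under (v)'s hypotheses: $r\leq r_{\mm}$ follows from (a) and Cauchy--Binet (giving $g(0,x)^r$ in the ideal of $r\times r$ minors of the first $r$ rows, so some such minor is not in $\mm\cdot A$), while $r\geq r_{\mm}$ follows from (b) by rerunning the Cauchy--Binet cancellation argument of (iii) $\Rightarrow$ (i) with $J=\mm$ applied to the block form in which $\alpha$ is the just-found $r\times r$ submatrix, contradicting $\rank\Phi_{\mm}>r$. Once $r=r_{\mm}$, reorder columns so that selected $r\times r$ minor becomes $\det\alpha$; then (iv) applies, and for any $(0,w)\in\im\Phi$ with preimage $(v_1,v_2)$ the relation $\alpha v_1+\beta v_2=0$ multiplied by $\alpha^\#$ gives $(\det\alpha)w=((\det\alpha)\delta-\gamma\alpha^\#\beta)v_2\in J\cdot A^l$, and Lemma~\ref{lem:cancel} yields $w\in J\cdot A^l$.

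The hard part is the Cauchy--Binet step in (iii) $\Rightarrow$ (i): one must check, by a combinatorial case analysis on how many rows and columns of an $(r+1)\times(r+1)$ submatrix of $U$ come from each block, that the determinant unavoidably involves an entry of the lower-right block $g\delta-\gamma\alpha^\#\beta$, and then clear the factor of $g^{r+1}$ by iterated application of Lemma~\ref{lem:cancel}. The same argument is reused (with $J=\mm$) in the (iv) $\Rightarrow$ (v) direction of the bridge to force $r=r_{\mm}$. All remaining implications are either reformulations or direct consequences of the matrix identity \eqref{enq:identity} and the cancellation lemma.
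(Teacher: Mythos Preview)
Your proposal is correct and follows essentially the same approach as the paper: the paper runs the cycle $(\mathrm{ii})\Rightarrow(\mathrm{i})\Rightarrow(\mathrm{iv})\Rightarrow(\mathrm{iii})\Rightarrow(\mathrm{ii})$ rather than your $(\mathrm{i})\Rightarrow(\mathrm{ii})\Rightarrow(\mathrm{iii})\Rightarrow(\mathrm{i})$ with $(\mathrm{iv})\Leftrightarrow(\mathrm{iii})$ attached, but the substantive steps are identical---the matrix identity of Remark~\ref{rem:identity} to pass between minors and the entries of $g\delta-\gamma\alpha^\#\beta$, and Lemma~\ref{lem:cancel} to clear powers of $g$ (what you phrase as Cauchy--Binet plus cancellation is exactly the paper's ``all $(r+1)\times(r+1)$ minors of $g\Phi$ are combinations of the entries of $(\det\alpha)\delta-\gamma\alpha^\#\beta$''). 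The bridge $(\mathrm{iv})\Leftrightarrow(\mathrm{v})$ and the translation $(\mathrm{v})\Leftrightarrow(\mathrm{vi})$ are likewise handled the same way, including the step of showing $r=r_{\mm}$ by reusing the $(\mathrm{ii})\Leftrightarrow(\mathrm{iii})$ argument with $J=\mm$.
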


\begin{proof}
$\mathrm{(ii)}\Rightarrow\mathrm{(i)}$: Clearly $r\leq\rank\Phi_{\mm}$ and $\rank\Phi_{\mm}\leq\rank_J\Phi$. Hence all three are equal if $\rank_J\Phi=r$.
\smallskip

$\mathrm{(i)}\Rightarrow\mathrm{(iv)}$: By Remark~\ref{rem:identity}, all entries of $(\det\alpha)\cdot\delta-\gamma\cdot\alpha^\#\cdot\beta$ are $(r_{\mm}+1)\times(r_{\mm}+1)$ minors of $\Phi$, and hence they belong to $J\!\cdot\!A$ if $\rank_J\Phi=r_{\mm}$.
\smallskip

$\mathrm{(iv)}\Rightarrow\mathrm{(iii)}$: Set $r=r_{\mm}$ and let $\alpha,\beta,\gamma,\delta$ be as in (iv).
\smallskip

$\mathrm{(iii)}\Rightarrow\mathrm{(ii)}$: Set $g=\det\alpha$. By the matrix 
identity of Remark~\ref{rem:identity}, all $(r+1)\times(r+1)$ minors of 
$g\cdot\Phi$ are combinations of the entries of 
$(\det\alpha)\cdot\delta-\gamma\cdot\alpha^\#\cdot\beta$ with coefficients in 
$A$. Hence, if $\zeta$ is an $(r+1)\times(r+1)$ minor of $\Phi$, then 
$g^{r+1}\cdot\zeta\in J\!\cdot\!A$, which by Lemma~\ref{lem:cancel} 
implies $\zeta\in J\!\cdot\!A$.
\smallskip

$\mathrm{(v)}\Rightarrow\mathrm{(vi)}$: The homomorphism $\psi:A^l\to F$ can be extended to a surjective homomorphism $\Psi:A^q\to F$, which by Oka's coherence theorem extends to an exact sequence $A^p\stackrel{\Phi}{\longrightarrow}A^q\stackrel{\Psi}{\longrightarrow}F\to0$.
\smallskip

$\mathrm{(vi)}\Rightarrow\mathrm{(v)}$: The assumptions in (v) imply the assumptions in (vi), with the same $g$ and $\psi$ being the restriction of $\Psi$ (from the above exact sequence) to $\{0\}^r\oplus A^l$. Then $\ker\psi=\im\Phi\cap(\{0\}^r\oplus A^l)\subset J\!\cdot\!A^l$.
\smallskip

It remains to show that (iv) is equivalent to (v). Write $\Phi$ in block form
\eqref{eqn:block}, with $\alpha$ of size $r\times r$. We will use the fact 
that $(\vr,\sigma)\in A^q=A^r\oplus A^l$ belongs to 
$\im\Phi\cap(\{0\}^r\oplus A^l)$ if and only if 
$\sigma=\gamma\xi+\delta\eta$ and $\alpha\xi+\beta\eta=\vr=0$, for 
some $(\xi,\eta)\in A^r\oplus A^{p-r}$. Then 
$(\det\alpha)\cdot\xi=(\alpha^\#\cdot\alpha)(\xi)=-(\alpha^\#\cdot\beta)(\eta)$,
and hence
\[
(\det\alpha)\cdot\sigma=\gamma((\det\alpha)\cdot\xi)+(\det\alpha)\cdot\delta(\eta)=-(\gamma\cdot\alpha^\#\cdot\beta)(\eta)+(\det\alpha)\cdot\delta(\eta)\,.
\]
It follows that
\begin{equation}
\label{eqn:lem}
(\det\alpha)\cdot[\im\Phi\cap(\{0\}^r\oplus A^l)]\ \subset\ \{0\}^r\oplus\im[(\det\alpha)\cdot\delta-\gamma\cdot\alpha^\#\cdot\beta]
\subset\ \im\Phi\cap(\{0\}^r\oplus A^l)\,,
\end{equation}
where the latter inclusion is a consequence of Remark~\ref{rem:identity}.
\smallskip

$\mathrm{(v)}\Rightarrow\mathrm{(iv)}$: The assumptions of (iv) imply that all entries of $(\det\alpha)\cdot\delta-\gamma\cdot\alpha^\#\cdot\beta$ are in $\mm\!\cdot\!A$ (by Remark~\ref{rem:identity}, as $(r_{\mm}+1)\times(r_{\mm}+1)$ minors of $\Phi$). Therefore the assumptions of (v) follow with $r:=r_{\mm}$, $l:=q-r$ and $g:=\det\alpha$. Indeed, $g\cdot\mathrm{Id}_r=\alpha\cdot\alpha^\#$ and so
\[
g\cdot A^q\ \subset\ \alpha(A^r)\oplus A^l\ \subset\ \im\Phi+(\{0\}^r\oplus A^l)\,.
\]
Also, by (\ref{eqn:lem}), $\zeta=(\vr,\sigma)\in\im\Phi\cap(\{0\}^r\oplus A^l)$ implies $g\cdot\sigma\in\im[(\det\alpha)\cdot\delta-\gamma\cdot\alpha^\#\cdot\beta]$. Hence $g\cdot\zeta\in\mm\!\cdot\!A^q$, and therefore $\zeta\in\mm\!\cdot\!A^q$, by Lemma~\ref{lem:cancel}.

Now, (v) implies $\im\Phi\cap(\{0\}^r\oplus A^l)\subset\{0\}^r\oplus J\!\cdot\!A^l$, which by (\ref{eqn:lem}) means that
$\im[(\det\alpha)\cdot\delta-\gamma\cdot\alpha^\#\cdot\beta]\subset J\!\cdot\!A^l$, and hence the entries of $(\det\alpha)\cdot\delta-\gamma\cdot\alpha^\#\cdot\beta$ are in $J\!\cdot\!A$.
\smallskip

$\mathrm{(iv)}\Rightarrow\mathrm{(v)}$: Let $\pi_1:A^q=A^r\oplus A^l\to A^r$ 
denote the canonical projection to the first direct summand. By the 
assumptions of (v), there is a matrix $\Xi$ of size $p\times r$ with entries 
in $A$, such that $g\cdot\mathrm{Id}_r=\pi_1\cdot\Phi\cdot\Xi$. Since 
$g(0,x)\neq0$, it follows that $\rank(\pi_1\cdot\Phi)=r$. Therefore there is 
an ordering of columns of $\Phi$ such that $\pi_1\cdot\Phi=[\alpha,\beta]$, 
with $\alpha$ of size $r\times r$ and $(\det\alpha)(0,x)\neq0$. Then $\Phi$
has block form \eqref{eqn:block} and 
$\{0\}^r\oplus\im[(\det\alpha)\cdot\delta-\gamma\cdot\alpha^\#\cdot\beta]\subset\im\Phi\cap(\{0\}^r\oplus A^l)$; hence, by the assumptions of (v), all entries 
of $(\det\alpha)\cdot\delta-\gamma\cdot\alpha^\#\cdot\beta$ are in 
$\mm\cdot A^l$. Using the equivalence of (ii) and (iii) for $J=\mm$, we see 
that $r=\rank_\mm\Phi=\rank\Phi_\mm$; i.e., the assumptions of (iv) are 
satisfied. It follows that 
$J\cdot A^l\supset \im[(\det\alpha)\cdot\delta-\gamma\cdot\alpha^\#\cdot\beta]$,
hence $\{0\}^r\oplus J\cdot A^l\supset (\det\alpha)\cdot[\im\Phi\cap(\{0\}^r\oplus A^l)]$, by~\eqref{eqn:lem}, 
and thus $\{0\}^r\oplus J\cdot A^l\supset \im\Phi\cap(\{0\}^r\oplus A^l)$, 
by Lemma~\ref{lem:cancel}.
\end{proof}

\begin{lemma}
\label{lem:ker2rank}
Assume that $\ker(\Phi_{\mm})=(\ker_J\Phi)(0)$. Then $\rank\Phi_{\mm}=\rank_J\Phi$.
\end{lemma}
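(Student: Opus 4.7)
The plan is to verify condition~(iv) of Lemma~\ref{lem:6equiv} directly from the hypothesis: once I show that every entry of $M := g\cdot\delta-\gamma\cdot\alpha^\#\cdot\beta$ lies in $J\cdot A$, the equivalences already established give $\rank_J\Phi = r_\mm = \rank\Phi_\mm$. Note first that the entries of $M_\mm$ are $(r_\mm+1)\times(r_\mm+1)$ minors of $\Phi_\mm$ (by Remark~\ref{rem:identity}) and hence vanish, so a priori $M\in\mm\cdot A$.

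For each $i\in\{1,\dots,p-r_\mm\}$, I will probe $M$ using the vector $\zeta_i := (-\alpha^\#\beta\,e_i,\,g\cdot e_i)\in A^{r_\mm}\oplus A^{p-r_\mm}$, for which a direct block computation gives $\Phi\zeta_i=(0,\,Me_i)$. Since $M_\mm=0$, one has $\zeta_i(0)\in\ker\Phi_\mm$, and the hypothesis produces $\tilde\zeta_i=(\tilde\xi_i,\tilde\eta_i)\in\ker_J\Phi$ with $\tilde\zeta_i(0)=\zeta_i(0)$. A routine manipulation of the two block relations $\alpha\tilde\xi_i+\beta\tilde\eta_i\in J\cdot A^{r_\mm}$ and $\gamma\tilde\xi_i+\delta\tilde\eta_i\in J\cdot A^{q-r_\mm}$---multiply the first by $\gamma\cdot\alpha^\#$ and subtract from $g$ times the second, so that the $g\tilde\xi_i$ terms cancel---yields $M\tilde\eta_i\in J\cdot A^{q-r_\mm}$. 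Writing $\tilde\eta_i=g\cdot e_i+\mu_i$ with $\mu_i\in\mm\cdot A^{p-r_\mm}$ (from $\tilde\zeta_i(0)=\zeta_i(0)$), this becomes
\[
g\cdot M_i \;=\; M\tilde\eta_i - M\mu_i,
\]
where $M_i$ denotes the $i$-th column of $M$.

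The conclusion will follow from a bootstrap that upgrades the a priori bound $M\in\mm\cdot A$ all the way to $M\in J\cdot A$. I claim by induction on $k\geq 1$ that $M\in(J+\mm^k)\cdot A$; the base case $k=1$ is $M\in\mm\cdot A$. Assuming the inductive hypothesis at level $k$, every entry of $M\mu_i$ lies in $(J+\mm^k)\cdot\mm\cdot A\subset(J+\mm^{k+1})\cdot A$; combined with $M\tilde\eta_i\in J\cdot A^{q-r_\mm}$, the displayed identity gives $g\cdot M_i\in(J+\mm^{k+1})\cdot A^{q-r_\mm}$. Lemma~\ref{lem:cancel}, applied with the ideal $J+\mm^{k+1}$, cancels $g$ to yield $M_i\in(J+\mm^{k+1})\cdot A^{q-r_\mm}$, so $M\in(J+\mm^{k+1})\cdot A$. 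Since $\mm\cdot A\subset\nn$, Krull's intersection theorem applied to the finitely generated $A$-module $A/JA$ gives $\bigcap_{k\geq 1}(J+\mm^k)\cdot A=J\cdot A$, completing the verification of~(iv).

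The main obstacle is seeing that no iterative lifting of the hypothesis is required: the single kernel lift $\tilde\zeta_i$ already encodes the self-improving identity $g\cdot M_i = M\tilde\eta_i - M\mu_i$, in which any current bound on $M$ tightens the bound on $M\mu_i$, yielding a sharper bound on $g\cdot M_i$, which the cancellation lemma then transfers to $M_i$ itself. This keeps the argument strictly inside the linear-algebra and Weierstrass-preparation framework used throughout the paper.
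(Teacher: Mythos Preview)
Your argument is correct. The block computation $\Phi\zeta_i=(0,Me_i)$ is right, the derivation of $M\tilde\eta_i\in J\cdot A^{q-r_\mm}$ from the two block relations is clean, and the bootstrap is sound: the inductive step uses $((J+\mm^k)\cdot A)\cdot(\mm\cdot A)\subset(J+\mm^{k+1})\cdot A$ together with Lemma~\ref{lem:cancel}, and the passage to the limit is justified since $\mm^k\cdot A\subset\nn^k$ and $A/J\cdot A$ is a finitely generated module over the Noetherian local ring $A$, so Krull gives $\bigcap_k(J\cdot A+\mm^k\cdot A)=J\cdot A$. What you verify is strictly speaking condition~(iii) of Lemma~\ref{lem:6equiv} (one block decomposition, not all), but that already implies~(i).

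The paper's own proof is different and avoids both the bootstrap and Krull. It lifts an arbitrary rank-$(p-r_\mm)$ system $\xi(x)$ of vectors in $\ker\Phi_\mm$ to a matrix $\Xi$ with $\Phi\cdot\Xi\equiv 0\ (\mathrm{mod}\ J)$, then uses Cramer's rule on a maximal nonvanishing minor of $\Xi(0,x)$ to produce $\Sigma$ with $\Xi\Sigma=\begin{bmatrix}g\cdot\mathrm{Id}\\ \Gamma\end{bmatrix}$; this exhibits $g\cdot\Phi$ as congruent modulo $J$ to a matrix of rank at most $r_\mm$, whence $\rank_J(g\cdot\Phi)\le r_\mm$, and a single application of Lemma~\ref{lem:cancel} finishes. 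So the paper's route is a one-shot rank argument that never needs to isolate the matrix $M$, while yours works directly with $M$ and trades the Cramer step for an iterative self-improvement plus an intersection theorem. Both stay within the paper's toolkit; the paper's version is slightly shorter, yours makes the role of the block matrix $M$ more transparent.
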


\begin{proof}
Clearly, $r_\mm=\rank\Phi_\mm\leq\rank_J\Phi$.
For the opposite inequality, choose $\xi_j(x)\in\ker\Phi_\mm\subset\KK\{x\}^p$, $1\leq j\leq p-r_\mm$, so that the $p\times(p-r_\mm)$ matrix $\xi(x)=[\xi_1(x),\dots,\xi_{p-r_\mm}(x)]$ has rank $p-r_\mm$. Then, by assumption, there is a matrix $\Xi=\Xi(y,x)$ of size $p\times(p-r_\mm)$ such that the entries of $\Phi\cdot\Xi$ are in $J\cdot A$ and $\Xi(0,x)=\xi(x)$. It follows that $\rank\Xi=p-r_\mm$. By Cramer's Rule (and after an appropriate reordering of the columns of $\Phi$ and rows of $\Xi$), there exists a matrix $\Sigma$ of size $(p-r_\mm)\times(p-r_\mm)$ with entries in $A$ such that
\[
\Xi\cdot\Sigma\,=\,\begin{bmatrix}g\cdot\mathrm{Id}_{p-r_\mm}\\ \Gamma\end{bmatrix}\,,
\]
where $g\in A$ satisfies $g(0,x)\neq0$, and $\Gamma$ is a matrix with entries in $A$ of size $r_\mm\times(p-r_\mm)$. Write $\Phi=[\Phi_1,\Phi_2]$, where $\Phi_1$ consists of the first $p-r_\mm$ columns of $\Phi$. It follows that $g\cdot\Phi_1+\Phi_2\cdot\Gamma$ is a matrix with entries in $J\cdot A$, and hence the entries of $\;g\cdot\Phi-[-\Phi_2\cdot\Gamma,g\cdot\Phi_2]\;$ are in $J\cdot A$, too. Since $\Phi_2$ is of size $q\times r_\mm$, then $\rank[-\Phi_2\cdot\Gamma,g\cdot\Phi_2]\leq\rank\Phi_2\leq r_\mm$. Consequently,
\[
\rank_J(g\cdot\Phi)=\rank_J[-\Phi_2\cdot\Gamma,g\cdot\Phi_2]\leq r_\mm\,.
\]
It thus suffices to show that $\rank_J\Phi=\rank_J(g\cdot\Phi)$, but that is a consequence of Lemma~\ref{lem:cancel}.
\end{proof}

\begin{remark}
\label{rem:proj-ker}
Let $\Phi$ be as in Lemma~\ref{lem:6equiv}\,(iv), and let $\pi_2:A^p=A^{r_\mm}\oplus A^{p-r_\mm}\to A^{p-r_\mm}$ denote the canonical projection to the second direct summand. Then
\[
(\ker_J\Phi)(0)=\ker(\Phi_\mm)\ \ \mathrm{iff}\ \ \pi_2((\ker_J\Phi)(0))=\pi_2(\ker(\Phi_\mm))\,,
\]
where $J$ is an ideal in $R$.
Indeed, since $(\ker_J\Phi)(0)$ is always contained in $\ker(\Phi_\mm)$ (cf. proof of Lemma~\ref{lem:kernels}), it suffices to show that  $\;\pi_2((\ker_J\Phi)(0))\supset\pi_2(\ker(\Phi_\mm))$ implies $\ker(\Phi_\mm)\subset(\ker_J\Phi)(0)$. Let $\zeta=\zeta(x)$ be an element of $\ker\Phi_\mm$, and let $\xi\in\ker_J\Phi$ be such that $\pi_2(\xi(0,x))=\pi_2(\zeta)$. It suffices to show that $\zeta(x)=\xi(0,x)$. Since $\eta(x):=\xi(0,x)-\zeta(x)$ belongs to $\ker\pi_2\cap\ker\Phi_\mm$, it follows that $\eta=(\eta',0)\in A^{r_\mm}\oplus A^{p-r_\mm}$ and $\alpha(0,x).\eta'(x)=0$. Therefore $(\det\alpha)(0,x)\cdot\eta'(x)=0$, hence $\eta'=0$, and $\eta=0$, as required.
\end{remark}

\begin{lemma}
\label{lem:mod-J}
Let $\Phi$ and $\pi_2:A^p=A^{r_\mm}\oplus A^{p-r_\mm}\to A^{p-r_\mm}$ be as above, and let $J$ be an ideal in $R$. Then $\eta\in\pi_2(\ker_J\Phi)$ if and only if the following two conditions hold
\begin{gather*}
(\alpha^\#\cdot\beta)(\eta)\in g\cdot A^{r_\mm}+J\cdot A^{r_\mm}\\
(g\cdot\delta-\gamma\cdot\alpha^\#\cdot\beta)(\eta)\in J\cdot A^{q-r_\mm}\,,
\end{gather*}
where $g$ denotes $\det\alpha$.
\end{lemma}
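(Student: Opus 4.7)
The plan is to unfold the condition $\zeta=(\xi,\eta)\in A^{r_\mm}\oplus A^{p-r_\mm}$ belonging to $\ker_J\Phi$ as the two row-conditions $\alpha\xi+\beta\eta\in J\!\cdot\!A^{r_\mm}$ and $\gamma\xi+\delta\eta\in J\!\cdot\!A^{q-r_\mm}$, and then to translate back and forth using the adjoint identity $\alpha\cdot\alpha^\#=\alpha^\#\!\cdot\alpha=g\cdot\mathrm{Id}_{r_\mm}$ with $g=\det\alpha$. Passage from a ``$g$-multiplied'' membership to the genuine one is handled by Lemma~\ref{lem:cancel}, since $g(0,x)\neq0$.

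For the forward direction I would take $(\xi,\eta)\in\ker_J\Phi$ and set $\rho:=\alpha\xi+\beta\eta\in J\!\cdot\!A^{r_\mm}$. Applying $\alpha^\#$ gives $g\xi+(\alpha^\#\!\cdot\beta)(\eta)=\alpha^\#\rho\in J\!\cdot\!A^{r_\mm}$, which immediately yields (1). To extract (2), substitute $(\alpha^\#\!\cdot\beta)(\eta)=\alpha^\#\rho-g\xi$ into $g\delta\eta-(\gamma\!\cdot\alpha^\#\!\cdot\beta)(\eta)$; the result rewrites as $g(\gamma\xi+\delta\eta)-\gamma\alpha^\#\rho$, and both summands lie in $J\!\cdot\!A^{q-r_\mm}$.

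For the converse I would use (1) to pick $\xi\in A^{r_\mm}$ (the negative of the $g$-coefficient witnessing (1)) so that $g\xi+(\alpha^\#\!\cdot\beta)(\eta)=j\in J\!\cdot\!A^{r_\mm}$. Multiplying by $\alpha$ and using $\alpha\cdot\alpha^\#=g\cdot\mathrm{Id}$ gives $g(\alpha\xi+\beta\eta)\in J\!\cdot\!A^{r_\mm}$, and Lemma~\ref{lem:cancel} strips the $g$, delivering the first row-condition. For the second row, multiply $\gamma\xi+\delta\eta$ by $g$ and substitute $g\xi=-(\alpha^\#\!\cdot\beta)(\eta)+j$: the outcome is $(g\delta-\gamma\alpha^\#\beta)(\eta)+\gamma j$, which lies in $J\!\cdot\!A^{q-r_\mm}$ by (2). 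A second application of Lemma~\ref{lem:cancel} then gives $\gamma\xi+\delta\eta\in J\!\cdot\!A^{q-r_\mm}$, so $(\xi,\eta)\in\ker_J\Phi$ and $\eta\in\pi_2(\ker_J\Phi)$. The entire argument is a routine manipulation of the block identity; the only delicate points are the two invocations of Lemma~\ref{lem:cancel}, one per row, which convert ``$g$ times something lies in $J\!\cdot\!A$'' into the raw membership. No Weierstrass preparation or fibre-dimension induction is required here—this lemma sits purely at the linear-algebra level.
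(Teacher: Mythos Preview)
Your proof is correct and follows essentially the same route as the paper: the forward direction is identical, and for the converse both you and the paper choose $\xi$ with $g\xi+(\alpha^\#\beta)(\eta)\in J\!\cdot\!A^{r_\mm}$, then show $g(\alpha\xi+\beta\eta)$ and $g(\gamma\xi+\delta\eta)$ lie in the appropriate $J$-submodules before invoking Lemma~\ref{lem:cancel}. The only cosmetic difference is that the paper packages the two row-conditions as $g\cdot(\xi,\eta)\in\ker_J\Phi$ and applies Lemma~\ref{lem:cancel} once to the full vector, whereas you invoke it separately for each block; the content is the same.
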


\begin{proof}
For the ``only if'' direction, let $(\xi,\eta)$ be an element of $\ker_J\Phi$. Then $\alpha\xi+\beta\eta\in J\cdot A^{r_\mm}$ and $\gamma\xi+\delta\eta\in J\cdot A^{q-r_\mm}$, hence
\begin{gather*}
g\cdot\xi+(\alpha^\#\cdot\beta)(\eta)=\alpha^\#\cdot(\alpha\xi+\beta\eta)\equiv0\ (\mathrm{mod\ }J\!\cdot\! A^{r_\mm}),\quad\mathrm{and}\\
(g\!\cdot\!\delta-\gamma\!\cdot\!\alpha^\#\!\cdot\!\beta)(\eta)=g\!\cdot\!(\gamma\xi+\delta\eta)-\gamma\!\cdot\!(g\!\cdot\!\xi+(\alpha^\#\!\cdot\!\beta)(\eta))\equiv0\ (\mathrm{mod\ }J\!\cdot\! A^{q-r_\mm}).
\end{gather*}
Now, for the ``if'' direction, let $\xi\in A^{r_\mm}$ be such that $g\cdot\xi+(\alpha^\#\cdot\beta)(\eta)\equiv0$ modulo $J\!\cdot\!A^{r_\mm}$, and assume that $(g\cdot\delta-\gamma\cdot\alpha^\#\cdot\beta)(\eta)\in J\cdot A^{q-r_\mm}$. Then
\begin{gather*}
g\cdot(\alpha\xi+\beta\eta)=\alpha\cdot(g\cdot\xi+(\alpha^\#\cdot\beta)(\eta))\equiv0\ (\mathrm{mod\ }J\!\cdot\!A^{r_\mm}),\quad\mathrm{and}\\
g\!\cdot\!(\gamma\xi+\delta\eta)=(g\!\cdot\!\delta-\gamma\!\cdot\!\alpha^\#\!\cdot\!\beta)(\eta)+\gamma\!\cdot\!(g\!\cdot\!\xi+(\alpha^\#\!\cdot\!\beta)(\eta))\equiv0\ (\mathrm{mod\ }J\!\cdot\!A^{q-r_\mm}).
\end{gather*}
Therefore $g\cdot(\xi,\eta)\in\ker_J\Phi$, hence $(\xi,\eta)\in\ker_J\Phi$ by Lemma~\ref{lem:cancel}, and so $\eta\in\pi_2(\ker_J\Phi)$, as required.
\end{proof}

\begin{remark}
\label{rem:eta-in-pi}
Since the entries of $g\!\cdot\!\delta-\gamma\!\cdot\!\alpha^\#\!\cdot\!\beta$ are in $\mm\!\cdot\!A$ (by Remark~\ref{rem:identity}), Lemma~\ref{lem:mod-J} applied to $J=\mm$ asserts that
\[
\eta\in\pi_2(\ker(\Phi_\mm))\quad\mathrm{iff}\quad(\alpha^\#\cdot\beta)(0,x).\eta(x)\in g(0,x)\cdot A(0)^{r_\mm}\,.
\]
\end{remark}

\begin{proof}[Proof of Proposition~\ref{prop:main}]

Let $\pi_2:A^p=A^{r_\mm}\oplus A^{p-r_\mm}\to A^{p-r_\mm}$ be as above. By Lemma~\ref{lem:ker2rank} and Remark~\ref{rem:proj-ker}, it suffices to show the equivalence
\[
\pi_2((\ker_J\Phi)(0))=\pi_2(\ker(\Phi_\mm))\quad\mathrm{iff}\quad(\ker_J G)(0)=\ker(G_{\mm})\,,
\]
under the assumption that $r_\mm=\rank_J\Phi$ (i.e., the equivalent conditions of Lemma \ref{lem:6equiv} are satisfied).

Suppose first that $g$ is a unit in $A$ (and hence $(\ker_JG)(0)=\ker(G_\mm)$ trivially).
Then the condition $\;(\alpha^\#\cdot\beta)(\eta)\in g\cdot A^{r_\mm}+J\cdot A^{r_\mm}\;$ of Lemma~\ref{lem:mod-J} is vacuous, because $g\!\cdot\!A^{r_\mm}+J\!\cdot\!A^{r_\mm}=A^{r_\mm}$. Since the entries of $g\!\cdot\!\delta-\gamma\!\cdot\!\alpha^\#\!\cdot\!\beta$ are in $J\!\cdot\!A$ (Lemma~\ref{lem:6equiv}\,(iv)), it follows from Lemma~\ref{lem:mod-J} that $\pi_2(\ker_J\Phi)=A^{p-r_\mm}$.
Therefore $\;\pi_2((\ker_J\Phi)(0))\supset\pi_2(\ker(\Phi_\mm))$, and hence $(\ker_J\Phi)(0)=\ker(\Phi_\mm)$, by Remark~\ref{rem:proj-ker}.
\smallskip

Suppose then that $g$ is not a unit in $A$; i.e., $g(0,0)=0$.
Let $\eta=\eta(x)\in A(0)^{p-r_\mm}$. Since $g(0,x)\neq0$, then after a generic linear change of the $x$-variables, $g$ is regular in $x_m$. Applying the Weierstrass division theorem, we get
\[
\eta(x)=\sum_{j=1}^d\eta_j(\tilde x)\cdot x_m^{d-j}+g(0,x)\cdot\tilde q(x)\,,
\]
where $\tilde x=(x_1,\dots,x_m)$. Hence, by Remark~\ref{rem:eta-in-pi},
\[
\eta\in\pi_2(\ker(\Phi_\mm))\quad\mathrm{iff}\quad (\alpha^\#\!\cdot\!\beta)(0,x).(\sum_{j=1}^d\eta_j(\tilde x)\cdot x_m^{d-j})\in g(0,x)\cdot A(0)^{r_\mm}\,.
\]
By \eqref{eq:mod1g} and \eqref{eq:mod2g}, the latter is the case iff $\{\eta_j(\tilde x)\}_{j=1}^d\in\ker(G_\mm)$.

Finally, let $\eta=\eta(y,x)\in A^{p-r_\mm}$. By the Weierstrass division theorem (after a linear change of the $x$-variables, if needed),
\[
\eta(y,x)=\sum_{j=1}^d\eta_j(y,\tilde x)\cdot x_m^{d-j}+g(y,x)\cdot\tilde q(y,x)\,,
\]
where $\tilde x=(x_1,\dots,x_m)$. Since the entries of $g\!\cdot\!\delta-\gamma\!\cdot\!\alpha^\#\!\cdot\!\beta$ are in $J\!\cdot\!A$ (Lemma~\ref{lem:6equiv}\,(iv)), Lemma~\ref{lem:mod-J} implies that
\[
\eta\in\pi_2(\ker_J\Phi)\quad\mathrm{iff}\quad(\alpha^\#\!\cdot\!\beta)(\sum_{j=1}^d\eta_j(y,\tilde x)\cdot x_m^{d-j})\in g\cdot A^{r_\mm}+J\cdot A^{r_\mm}\,.
\]
By \eqref{eq:mod1g} and \eqref{eq:mod2g}, the latter is the case iff 
$\{\eta_j(y,\tilde x)\}_{j=1}^d\in\ker_J G$, which completes the proof of 
Proposition~\ref{prop:main} and Theorem~\ref{thm:main}.
\end{proof}

\bibliographystyle{amsplain}

\end{document}